\newtheorem{lem}{Lemma}
\newtheorem{thm}{Theorem}
\newtheorem{prop}{Proposition}
\theoremstyle{remark}
\newtheorem{rmk}{Remark}
\theoremstyle{definition}
\newtheorem{defn}{Definition}
\DeclareMathOperator*\Res{{Res}} 
 \DeclareMathOperator\re{{Re}}
 \numberwithin{equation}{section}
\newcommand{\D}{\displaystyle}
\numberwithin{equation}{section}
\newcounter{comment}
\def\l{\lambda}
\begin{document}
\title{Hankel determinants for a singular complex weight  and the first and third Painlev\'{e}  transcendents}

\author{Shuai-Xia Xu$^a$, Dan Dai$^b$ and Yu-Qiu Zhao$^c$\footnote{Corresponding author (Yu-Qiu Zhao).
 {\it{E-mail
address:}} {stszyq@mail.sysu.edu.cn} }  }
  \date{
  \small
 {\it{$^a$Institut Franco-Chinois de l'Energie Nucl\'{e}aire, Sun Yat-sen University, GuangZhou
510275,  China}}\\
{\it{$^b$Department of Mathematics, City University of Hong Kong, Tat Chee Avenue, Kowloon, Hong Kong}}\\
 {\it{$^c$Department of Mathematics, Sun Yat-sen University, GuangZhou
510275, China}}
}

\maketitle

\begin{abstract}
  In this paper, we consider polynomials orthogonal with respect to a varying   perturbed Laguerre weight  $e^{-n(z-\log z+t/z)}$ for $t<0$ and $z$ on certain contours in the complex plane.
   When the parameters $n$, $t$ and the degree $k$ are fixed, the   Hankel determinant for the singular complex weight   is shown to be the
    isomonodromy  $\tau$-function  of the Painlev\'e III equation.
    When the degree $k=n$, $n$ is  large and  $t$ is close to a critical value, inspired by the study of the Wigner time delay in quantum transport, we show that  the double scaling  asymptotic behaviors of the recurrence coefficients and the Hankel determinant are described in terms of a Boutroux tronqu\'ee solution to the Painlev\'e I equation. Our approach is based on the Deift-Zhou nonlinear steepest descent method for Riemann-Hilbert problems.

\end{abstract}

%%%%%%%%%%%%%%%%%%%%%%%%%%%%%%%%%%%%%%%%%%%%%%%%%%%%%%%%%%%%%%%%%%

%\vspace{2cm}

\noindent 2010 \textit{Mathematics Subject Classification}: Primary 33E17, 34M55, 41A60.

\noindent \textit{Keywords and phrases}: Asymptotics; Hankel determinants; Painlev\'e I   equation; Painlev\'e   III equation; Riemann-Hilbert approach.

\newpage

%%%%%%%%%%%%%%%%%%%%%%%%%%%%%%%%%%%%%%%%%%%%%%%%%%%%%%%%%%%%%%%%%%%%%%%%%%%%%%%%%%%%%%%%%%%

\section{Introduction and statement of results}

Let $w_t(x)$ be the following singularly perturbed Laguerre weight
\begin{equation}\label{pL weight}
w_t(x)=w(x;t)=e^{-nV_t(x)}, \qquad x \in (0,+\infty)
\end{equation}
with
\begin{equation} \label{potential}
  V_t(x)=x-\log x +\frac{t}{x},~~t\geq 0.
\end{equation}
The Hankel determinant is defined as
\begin{equation}\label{Hankel}
D_k[w(x;t)]=\det(\mu_{i+j})_{i,j=0}^{k-1},
\end{equation}
where $\mu_j$ is the $j$-th moment of $w_t(x)$, namely,
\begin{equation*}
\mu_j=\int_{0}^{\infty} x^{j}w_t(x)dx.
\end{equation*}
Note that when $t\geq 0$, the integral in the above formula is convergent so that the Hankel determinant $D_k[w;t]=D_k[w(x;t)]$ in \eqref{Hankel} is well-defined. Moreover, it is well-known that the Hankel determinant can be expressed as
\begin{equation}\label{D-n and leading coeff}
  D_k[w;t] = \prod_{j=0}^{k-1} \gamma_{j,n}^{-2}(t);
\end{equation}see \cite[p.28]{Szego},
where $\gamma_{k,n}(t)$ is the leading coefficient of the $k$-th order polynomial orthonormal with respect to the weight function in \eqref{pL weight}. Or, let $\pi_{k,n}(x)$ be the $k$-th order monic orthogonal polynomial, then $\gamma_{k,n}(t)$ appears in the following orthogonal relation
\begin{equation*}%\label{pL polynomials: orthogonality:positive}
\int_{0}^{\infty}\pi_{k,n}(x)x^j e^{-nV_t(x)}dx=\gamma_{k,n}^{-2}(t)\delta_{jk}, \qquad j=0,1,\cdots,k
\end{equation*}
for fixed $n$. Moreover,  the monic  orthogonal polynomials $\pi_{k,n}(x)$ satisfy a three-term recurrence relation as follows:
\begin{equation} \label{recurrence relation}
  x\pi_{k,n}(x) = \pi_{k+1,n} (x) + \alpha_{k,n}(t) \pi_{k,n}(x) + \beta_{k,n}(t) \pi_{k-1,n}(x),~~k=0,1,\cdots,
\end{equation}with $\pi_{-1, n}(x)\equiv 0$ and $\pi_{0, n}(x)\equiv 1$,
where the appearance of  $n$ and $t$ in the coefficients indicates  their  dependence  on $n$ and  the parameter $t$ in the varying weight \eqref{pL weight}.

In this paper, however, we will focus on the case when $t<0$. Since all the  above  integrals on $[0, \infty)$  become divergent for negative  $t$, we need to deform  the integration path from the positive real axis to certain curves in the complex plane. Consequently, the orthogonality will be converted to the \emph{non-Hermitian orthogonality} in the complex plane. More precisely, let us define the following new weight function
on $\Gamma=\Gamma_1\cup\Gamma_2\cup\Gamma_3$:
\begin{equation}\label{pL new weight}
w_t(z)=w(z;t)=c_j e^{-nV_t(z)},~~z \in \Gamma_j,~~\textrm{with}~c_1=1,c_2=\alpha,c_3=1-\alpha,
\end{equation}where  $\alpha$ is a complex constant, the curves $\Gamma_1=(2\delta,\infty)$,
                     {  $\Gamma_2=\{\delta  (1+e^{i\theta}  )|\; \theta\in(0,\pi)\}$ and $\Gamma_3=\{\delta   (1+e^{i\theta}  )| \; \theta\in(-\pi,0)\}$;}
see Figure \ref{contour-ortho}, $\delta$ being a positive constant. The potential is  defined in the cut plane $\mathbb{C}\setminus (-\infty, 0]$ as
\begin{equation} \label{potential new}
  V_t(z)=z-\log z +\frac{t}{z},~~\arg z\in (-\pi, \pi),~~t< 0.
\end{equation} The {  orthogonality}  relation now takes the form
\begin{equation}\label{pL polynomials: orthogonality:complex}
 \int_{\Gamma}\pi_{k,n}(z)z^{j}w_t(z)dz=\gamma_{k,n}^{-2}(t)\delta_{jk},~~j=0,1, \cdots,  k.
\end{equation}
%It is easy to see that for $t\geq0$ the orthogonality comes back to \eqref{pL polynomials: orthogonality:positive}.
\begin{figure}[h]
 \begin{center}
   \includegraphics[width=9cm]{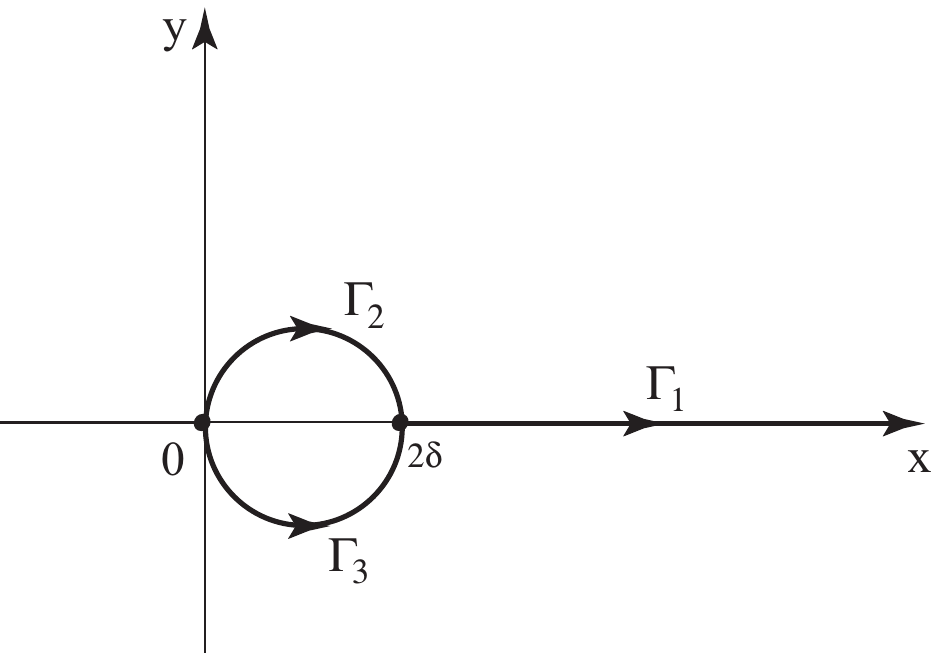} \end{center}
  \caption{Contour of orthogonality, $\Gamma=\Gamma_1\cup\Gamma_2\cup\Gamma_3$. }
 \label{contour-ortho}
\end{figure}

{  With the weight function $w_t(z)$ given  in \eqref{pL new weight}, the corresponding Hankel determinant $D_n[w;t]$ in \eqref{Hankel} is well-defined. However, since  $w_t(z)$ is not positive on $\Gamma$, the orthogonal polynomials $\pi_{k,n}(z)$ in \eqref{pL polynomials: orthogonality:complex} may not exist for some $k$, and \eqref{D-n and leading coeff}   only makes sense if all polynomials $\pi_{j,n}$ for $j=0,1,\cdots,k-1$      exist.
It is worth mentioning that  as part of our results, we will show that there exists a $t_{cr}<0$, such that $\pi_{n,n}(z)$ exists for $n$ large enough and $t\geq t_{cr}$; cf. Section \ref{sec-rhp-OPs}.  The}
        recurrence relation \eqref{recurrence relation} still makes sense for such $t$ if all of $\pi_{k-1,n}(x)$, $\pi_{k,n}(x)$ and $\pi_{k+1,n}(x)$ exist.   Note that   in the literature,  the polynomials with non-Hermitian orthogonality have been studied in several different contexts; see for example \cite{Ber:Tov,Ble:Dea,Dui:Kuij,fik1992},  where the cubic and quartic potentials are considered.

One of the main motivations of this paper comes from the Wigner time-delay in the study of quantum mechanical scattering problem.  To describe the electronic transport in mesoscopic (coherent) conductors, Wigner \cite{Wigner1955} introduced the so-called time-delay matrix $Q$; see also Eisenbud \cite{Eisenbud1948} and Smith \cite{Smith1960}. The eigenvalues $\tau_k$ of $Q$, called the proper delay times, are used to describe the time-dependence of a scattering process. The joint distribution  of the inverse proper delay time $\gamma_k=1/\tau_k$ was found, by Brouwer et al.\;\cite{Bro:Fra:Bee}, to be
%the Laguerre ensemble, of which the joint probability density function is
\begin{equation}\label{jpd of inverse proper time delay}
P(\gamma_1,\gamma_2,...\gamma_n)=\frac 1{Z_n} \prod^n_{i=1}\gamma_i^n e^{-\tau_H\gamma_i}\prod_{1\leq i<j\leq n}|\gamma_i-\gamma_j|^2.
\end{equation}
Then the probability density function of the average of the proper time delay, namely the Wigner time-delay distribution, is defined as
\begin{equation}\label{wigner time delay distribution}
P_n(\tau)=\frac 1{Z_n}\int_{\mathbb{R}_+^{n}}  \prod^n_{i=1}\gamma_i^n e^{-n\gamma_i}\prod_{1\leq i<j\leq n}|\gamma_i-\gamma_j|^2\; \delta\left (\tau-\sum_1^n\frac 1{\gamma_i}\right )\prod_{i=1}^n d\gamma_i.
\end{equation}
The moment generating function is the Laplace transformation of the Wigner time-delay distribution
\begin{equation}\label{moment generated function}
M_n(z)=\int_0^\infty e^{-z\tau}P_n(\tau)d\tau= \frac 1{Z_n}\int_{\mathbb{R}_+^{n}} \prod^n_{i=1}\gamma_i^n e^{-n\gamma_i-\frac z {\gamma_i}}\prod_{1\leq i<j\leq n}|\gamma_i-\gamma_j|^2 \prod_{i=1}^n d\gamma_i,
\end{equation}
which is closely related to the Hankel determinant \eqref{Hankel} as follows:
\begin{equation}\label{moment generated function and Hankel}
M_n(nt)=\frac {D_n[w(x;t)]}{D_n[w(x;0)]}.
\end{equation}
Recently, Texier and Majumdar \cite{texier} studied the  Wigner time-delay distribution by using  a Coulomb gas method. They showed  that
\begin{equation}
  P(\gamma_1, \gamma_2, \cdots, \gamma_n) \sim \exp\{-n^2 {E}[\rho(x)]\}\qquad \textrm{for large  } n,
\end{equation}
where $\rho(x)dx$ is the unique minimizer for an energy problem with the external field $V_t(x)$ in \eqref{potential}, and $E[\rho(x)]$ is the minimum energy. Moreover, the density $\rho(x)$ is computed explicitly in \cite{texier}, namely,
\begin{equation}\label{e measure}
\rho(x)=\frac {x+c}{2\pi x^2}\sqrt{(x-a)(b-x)}~~\mbox{for}~x\in [a, b],~ \mbox{with}~0<a<b,~c=t/\sqrt{ab}.
\end{equation}
{  Here  positive $a$ and $b$ are independent of $x$ and  implicitly determined by $t$ as follows:
\begin{equation}\label{a,b equation}
        1+\frac t{2ab}(a+b)=\sqrt{ab}; \quad \frac 12(a+b)-\frac t{\sqrt{ab}}=3.
  \end{equation}
  One may notice that  $\rho(x)dx$ is a probability measure on $[a,b]$ as long as  $a+c$  is  non-negative. Since $a+c$ is a continuous function of $t$, we see that $\rho(x)$ in \eqref{e measure} is non-negative for $t>t_{cr}$, where $t_{cr}= -\frac 3 4 \left (2^{1/3}-1\right )^2$ is the critical value of $t$ corresponding to the case $a+c=0$; see Theorem \ref{Theorem: Asymptotic of Hankel}. }
 It is very interesting to  observe  that, for {  this}  $t_{cr}<0$, we have $c_{cr} = -a_{cr} < 0$ and
\begin{equation}\label{e measure-critical}
\rho(x)=\frac 1{2\pi x^2}(x-a_{cr})^{3/2}(b_{cr}-x)^{1/2},
\end{equation}
where a phase transition emerges at the left endpoint $x=a_{cr}$. Here the critical values $t_{cr}$, $a_{cr}$ and $b_{cr}$ are explicitly given in \eqref{t-critical} and \eqref{a,b, mu critical}.

%Then Chen and Its (\cite{ci}),  prove that the logarithmic of the moment generated function satisfies the PIII equation for fixed $n$.
%And the asymptotics of the coefficients of the series expansion in $t$ of the logarithmic of the moment generated function were also considered.

It is also interesting to look at our problem from another point of view. Due to the term $\frac{t}{x}$ in the exponent of \eqref{pL weight}, we may consider the  origin as an essential singular point of the weight function. In recent years, orthogonal polynomials whose weights possess essential singularities have been studied extensively. For example, Chen and Its \cite{ci} consider orthogonal polynomials associated with the weight
\begin{equation} \label{chen-its-weight}
  w_t(x)=x^{\alpha}e^{-x-\frac tx},\quad x\in(0,\infty),~\alpha>0 \textrm{ and }t>0.
\end{equation}
They show  that, for fixed degree $n$, the recurrence coefficient satisfies a particular Painlev\'e III equation with respect to the parameter $t$, and the Hankel determinant of fixed size $D_n[w_t(x)]$ equals to the isomonodromy  $\tau$-function of the Painlev\'e III equation with parameters depending on $n$.
The matrix model and Hankel determinants $D_n[w;t]$ associated with the weight   in \eqref{chen-its-weight}
were also encountered      by  Osipov and Kanzieper \cite{Osi:Kan} in bosonic replica field theories.
 Later, the large $n$ asymptotics of the Hankel determinants $D_n[w_t(x)]$ associated with the weight function in \eqref{chen-its-weight} is studied by the current authors in \cite{xdz2014} and \cite{xdz2015}. For $t\in(0,d]$,  the asymptotics of the
Hankel determinants are derived and expressed in terms of certain Painlev\'e III transcendents. The asymptotics of the recurrence coefficients are also obtained therein. In the case of the Gaussian weight perturbed by essential singularity
\begin{equation}
  e^{-x^2-t/x^2},\quad x\in \mathbb{R},
\end{equation}
the double scaling limit of the Hankel determinants are also characterized in terms of  Painlev\'e III transcendents by Brightmore et al.\;in \cite{bmm}. Recently, Atkin, Claeys and Mezzadri \cite{At:Cla:Mez} extend the results to the case of Laguerre and Gaussian weight perturbed by a pole of higher order  at the
origin, they obtain the double scaling asymptotics of the Hankel determinants in terms of   a hierarchy of higher order analogs to the Painlev\'e III equation.

The main objective of this paper is to study the Hankel determinant  $D_k[w;t]$ with respect to the weight \eqref{pL new weight} in the region $t<0$. First, for fixed degree $k$, we will show  that the recurrence coefficient $\alpha_{k,n}$ satisfies a Painlev\'{e} III equation, and the Hankel determinant  $D_k[w;t]$ equals to the isomonodromy $\tau$-function of the Painlev\'{e} III equation.  Then, we will derive   the double scaling limit of the Hankel determinant $D_n[w;t]$, the recurrence coefficients and leading coefficients of the associated orthogonal polynomials.  Our results are described   in terms of a certain tronqu\'{e}e solution of the Painlev\'{e} I equation.

%%%%%%%%%%%%%%%%%%%%%%%%%%%%%%%%%%%%%%%%%%%%%%%%%%%%%%%%%%%%%%%%%%%%%%%%%%%%%%%%%%%%%%%%%%%

\subsection{A model Riemann-Hilbert problem for Painlev\'{e} I} \label{sec-model Riemann-Hilbert problem}

To state our results, we need certain special solutions to the Painlev\'e I equation
\begin{equation} \label{P1-eqn}
  y''(s) = 6y^2(s)  + s.
\end{equation}
The reader is referred to \cite[Ch.32]{nist}
for properties of the Painlev\'e I equation, as well as the other Painlev\'e equations.   In \cite{Kapaev}, Kapaev formulates the following model  Riemann-Hilbert (RH, for short)  problem for $\Psi(\zeta)= \Psi(\zeta; s)$, associated with the Painlev\'{e} I equation. This model RH problem will play a crucial role later in the construction of a local parametrix in the steepest descent analysis.

\begin{itemize}

  \item[(a)] $\Psi(\zeta; s)$ is analytic for $\zeta \in \mathbb{C} \setminus \Gamma_\Psi$, where
  \begin{equation}
    \Gamma_\Psi= \gamma_{-2} \cup \gamma_{-1} \cup \gamma_{1} \cup \gamma_{2} \cup \gamma^*
  \end{equation}
  are illustrated in Figure \ref{contour-p1-model}.

  \begin{figure}[h]
 \begin{center}
   \includegraphics[width=7.5cm]{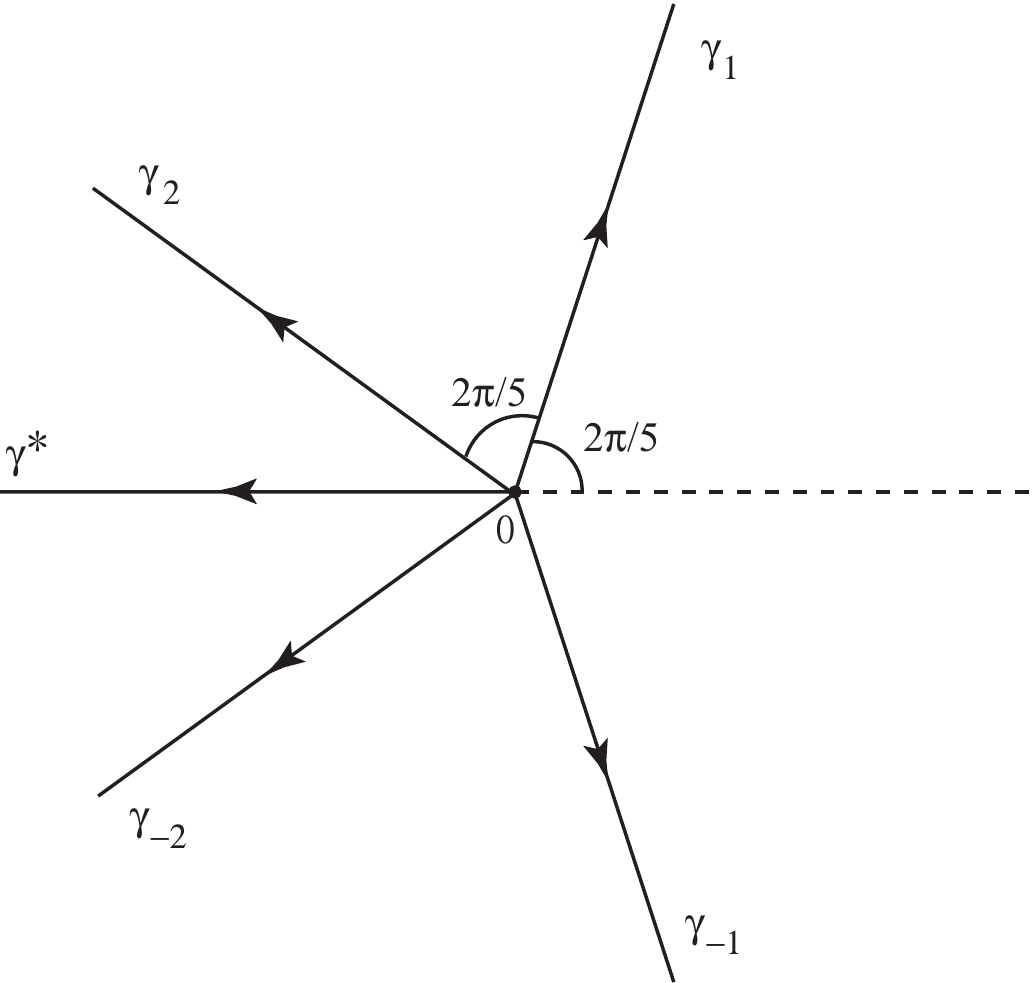} \end{center}
  \caption{The contour $\Gamma_\Psi$ associated with the Painlev\'e I equation}
 \label{contour-p1-model}
\end{figure}

  \item[(b)] Let $\Psi_{\pm} (\zeta; s)$ denote the limiting values of $\Psi(\zeta; s)$ as $\zeta$ tends to the contour $\Gamma_\Psi$ from the left and right sides, respectively. Then, $\Psi(\zeta; s)$ satisfies the following jump conditions
   \begin{equation}\label{Psi--jump}
  \Psi_+(\zeta; s)= \Psi_-(\zeta; s)
 \left\{\begin{array}{ll}
          \left(
                               \begin{array}{cc}
                                 1 & s_i \\
                               0& 1 \\
                                 \end{array}
                             \right), & z \in \gamma_k, \ k=\pm1; \\[.4cm]
           \left(
                               \begin{array}{cc}
                                 1 & 0 \\
                                 i& 1 \\
                                 \end{array}
                             \right), & z \in \gamma_k, \ k=\pm 2;  \\[.4cm]
          \left(
                               \begin{array}{cc}
                                0 &-i\\
                               -i&0 \\
                                 \end{array}
                             \right),  &  z\in \gamma^*,
        \end{array}\right .
 \end{equation}
        where $s_1=(1-\alpha)i$   and $s_{-1}=\alpha i$, with $\alpha$ being a complex constant.

   \item[(c)] As $\zeta \to \infty$, $\Psi(\zeta; s)$ satisfies the asymptotic condition
  \begin{equation}\label{Psi-infty}
    \Psi(\zeta; s)= \zeta^{\frac{1}{4} \sigma_3}  \frac{\sigma_3+\sigma_1}{\sqrt{2}}\left(I + \frac{\Psi_{-1}(s)}{\sqrt{\zeta}} + \frac{\Psi_{-2}(s)}{\zeta} + O(\zeta^{-\frac 32})\right)  e^{\theta(\zeta,s) \sigma_3}
    \end{equation}
    for $\arg \zeta\in (-\pi, \pi)$, where
    \begin{equation} \label{theta-def}
      \theta(\zeta,s)=\frac 45 \zeta^{\frac 52}+s\zeta^{\frac 12},
    \end{equation}
    $\sigma_1$ and $\sigma_3$ are the Pauli matrices
    \begin{equation*}
      \sigma_1 = \begin{pmatrix}
        0 & 1 \\ 1 & 0
      \end{pmatrix}, \qquad \sigma_3 = \begin{pmatrix}
        1 & 0 \\ 0 & -1
      \end{pmatrix}.
    \end{equation*}

\end{itemize}

It is known that, for each $\alpha\in \mathbb{C}$,
\begin{equation} \label{tronquee-sol}
  y_\alpha(s) = 2(\Psi_{-2} (s))_{12}
\end{equation}
is a solution of the Painlev\'e I equation \eqref{P1-eqn}. {  As a consequence, the above RH problem for $\Psi(\zeta; s)$ has a solution if and only if $s$ is not a pole of $y_\alpha(s)$. Due to the meromorphic property of the Painlev\'e I transcendents, one also see that the solution of the above RH problem for  $\Psi(\zeta; s)$ is meromorphic in the parameter $s$.
Moreover, it is shown in Kapaev \cite{Kapaev}}   that $y_\alpha(s)$ is the so-called \emph{tronqu\'ee} solution of Painlev\'e I whose asymptotic behavior is given by
\begin{equation}
y_\alpha(z)=y_0(z)+\frac {\alpha i}{\sqrt{\pi}}2^{-\frac {11}8}(-3z)^{-\frac 18}\exp\left[-\frac 152^{\frac {11}4}3^{\frac 14}(-z)^{\frac 54}\right]\left (1+O(z^{-\frac 38})\right )
\end{equation}
as $z\to \infty$ and $\arg z =\arg(-z)+\pi    \in[\frac 35\pi,\pi]$. Here $y_0(z)$ is the \emph{tritronqu\'ee} solution satisfying
\begin{equation}\label{Painleve I}
y_0(z)\sim\sqrt{-z/6}\left[ 1+ \sum_{k=1}^{\infty}a_k(-z)^{-5k/2} \right] \quad \textrm{as } z \to \infty, \ -\frac{\pi}{5}<\arg z< \frac{7\pi}{5},
\end{equation}
where  the coefficients $a_k$ can be determined recursively; see for example Joshi and Kitaev \cite{Jos:Kit}. The solution $y_\alpha(z)$ will appear in our main results below.

We mention several known facts about the coefficients in \eqref{Psi-infty} in addition to   \eqref{tronquee-sol}.  For example, the explicit formulas of $\Psi_{-1}(s)$ and $\Psi_{-2}(s)$   are given in \cite{Kapaev} {  as}
\begin{equation}
  \Psi_{-1}(s)= -\mathcal{H}_\alpha(s)\sigma_3, \qquad \Psi_{-2}(s) = \frac 12 \biggl( \mathcal{H}_\alpha^2(s)I+y_\alpha(s)\sigma_1 \biggr),
\end{equation}
where
\begin{equation} \label{P1-Hamilton}
       \mathcal{H}_\alpha(s)=\frac 12 y_\alpha'^2(s)-2y_\alpha^3(s)-sy_\alpha(s)
\end{equation}
is the Hamiltonian of Painlev\'{e} I.

\subsection{Statement of main results}

First of all, when the degree $k$ is fixed, we show that the recurrence coefficient $\alpha_{k,n}(t)$ satisfies a particular Painlev\'{e} III equation with certain initial conditions. Moreover, we prove that  the Hankel determinant $D_k[w(z;t)]$ is   related to the $\tau$-function of the Painlev\'{e} III equation. Similar results for the weight in \eqref{chen-its-weight} {  have} been obtained  by Chen and Its \cite{ci}.

\begin{thm}\label{Theorem: Hankel as tau function}
For fixed non-negative integer $k$, let $\alpha_{k,n}$ be the recurrence coefficient in \eqref{recurrence relation}, and
\begin{equation}
a_{k,n}(t)=\alpha_{k,n}(t)-\frac {2k+1+n}{n}.
\end{equation}
Then $a_k(t)= a_{k,n}(t)$ satisfies the following Painlev\'e III equation
\begin{equation}\label{introduction: painleve III}
a_k''=\frac {(a_k')^2}{a_k}-\frac {a_k'}{t}+n(2k+1+n)\frac {a_k^2}{t^2}+\frac {n^2a_k^3}{t^2}+\frac{n^2}{t}-\frac{n^2}{a_k},
\end{equation}
with the initial conditions $a_{k}(0)=0$, $a_{k}'(0)=1$. Moreover, we have
\begin{equation}\label{introduction:painleve III-tao function}
D_k[w;t]=\mathrm{const} \cdot \tau(t)\; e^{  {n^2t}/{2}}\; t^{  {k(k+n)}/{2}},
\end{equation}
where $\tau(t)$ is the Jimbo-Miwa-Ueno isomonodromy $\tau$-function of the above Painlev\'{e} III equation.
\end {thm}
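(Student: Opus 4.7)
The plan is to adapt the ladder-operator method of Chen and Its \cite{ci} for the weight \eqref{chen-its-weight}. The derivation is purely algebraic: it rests on the two Chen--Ismail compatibility relations together with the Toda identity $\partial_t w_t = -(n/z)w_t$, both of which transfer without change from the real-line setting of \cite{ci} to the non-Hermitian orthogonality on $\Gamma$, provided the polynomials $\pi_{j,n}$ for $j=0,\dots,k$ exist. Existence for $t$ in a complex neighbourhood of $0$ follows by continuity from the Laguerre case at $t=0$.

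I would first compute
\[
\frac{V_t'(z)-V_t'(y)}{z-y}=\frac{1}{yz}-\frac{t(y+z)}{y^{2}z^{2}},
\]
which shows that the ladder coefficients $A_{k,n}(z)$ and $B_{k,n}(z)$ are rational in $z$ with only a double pole at the origin. Writing
\[
A_{k,n}(z)=\frac{A_1(t)}{z}+\frac{A_2(t)}{z^2},\qquad B_{k,n}(z)=\frac{B_1(t)}{z}+\frac{B_2(t)}{z^2},
\]
and substituting into the standard $S_1,S_2$ Chen--Ismail compatibility relations yields closed nonlinear algebraic identities among $\alpha_{k,n}$, $\beta_{k,n}$, and the four auxiliary moments $A_1,A_2,B_1,B_2$. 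The Toda identity supplies the $t$-evolution of these moments. Eliminating everything except $\alpha_{k,n}$ produces a single second-order ODE in $t$, which under the shift $a_{k,n}=\alpha_{k,n}-(2k+1+n)/n$ is matched to the Painlev\'e III normal form \eqref{introduction: painleve III} by direct comparison of coefficients.

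For the initial data, at $t=0$ the weight degenerates to $e^{-nz}$ and the contour $\Gamma$ collapses to $(0,\infty)$; the classical Laguerre recurrence coefficient then gives $\alpha_{k,n}(0)=(2k+1+n)/n$, hence $a_{k,n}(0)=0$, and a first-order expansion of the moments in $t$ against the Laguerre weight yields $a_{k,n}'(0)=1$. For \eqref{introduction:painleve III-tao function}, I would invoke the Jimbo--Miwa--Ueno formula $(d/dt)\log\tau=\mathcal{H}(t)$, with $\mathcal{H}$ the Painlev\'e III Hamiltonian expressed through the Lax-pair auxiliary variables, together with the Toda identity
\[
\frac{d}{dt}\log D_k[w;t]=-n\sum_{j=0}^{k-1}\gamma_{j,n}^{2}\int_{\Gamma}z^{-1}\pi_{j,n}(z)^{2}w_t(z)\,dz.
\]
Re-expressing the right-hand side through the ladder coefficients $A_{k,n}, B_{k,n}$ matches it to $\mathcal{H}(t)$ modulo the elementary additive term $n^{2}/2+k(k+n)/(2t)$, whose antiderivative produces exactly the prefactor $e^{n^{2}t/2}\,t^{k(k+n)/2}$; the multiplicative constant in \eqref{introduction:painleve III-tao function} is then fixed by a reference value in $t$.

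The main obstacle is the algebraic bookkeeping: one must choose the correct Okamoto-type auxiliary variable so that the Chen--Ismail system collapses to Painlev\'e III in precisely the form \eqref{introduction: painleve III}, and independently verify that the difference $(d/dt)\log D_k-(d/dt)\log\tau$ equals the elementary function above. Both steps parallel those of \cite{ci}, but the $n$-scaling in the exponent of $w_t$ and the shift from $\alpha_{k,n}$ to $a_{k,n}$ must be tracked carefully throughout.
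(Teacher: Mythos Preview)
Your proposal is correct in outline but takes a genuinely different route from the paper. You adapt the Chen--Ismail ladder-operator method of \cite{ci}: compute the rational ladder coefficients $A_{k,n}(z),B_{k,n}(z)$, impose the compatibility relations $(S_1),(S_2)$ together with the Toda evolution $\partial_t w_t=-(n/z)w_t$, and eliminate to reach \eqref{introduction: painleve III}. The paper instead works through the Riemann--Hilbert problem for $Y$: it makes the explicit change of variables $\Phi(\lambda,s)=(ni/s)^{(n/2+k)\sigma_3}Y(s\lambda/(ni))e^{\frac{i}{2}(s\lambda-s/\lambda)\sigma_3}(s\lambda/(ni))^{(n/2)\sigma_3}$ with $s=ni\sqrt{-t}$, so that $\Phi$ has constant jumps, then reads off a Lax pair $\Phi_\lambda=A\Phi$, $\Phi_s=B\Phi$ of Painlev\'e~III type (Proposition~\ref{prop-phi-piii}). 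The solution is identified as $u(s)=-i(A_{-1})_{12}/(A_{-2})_{12}$, related to $a_{k,n}$ via $u=\sqrt{-t}/(ia_{k,n})$, and \eqref{introduction: painleve III} falls out by substitution. For \eqref{introduction:painleve III-tao function} the paper applies the Jimbo--Miwa--Ueno residue formula directly to the expansions of $\Phi$ at $0$ and $\infty$, obtaining $\frac{d}{dt}\log\tau=\frac{1}{2t}(2H_{k,n}-n^2t-k(k+n))$ with $H_{k,n}=t\frac{d}{dt}\log D_k$, and integrates.

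What each approach buys: your ladder route is more elementary and self-contained for the ODE part, and reuses \cite{ci} almost verbatim modulo the $n$-scaling. The paper's Lax-pair route is less computational for the $\tau$-function step, since the JMU residue formula is stated for exactly the data $\Phi_{-1},\Phi_1$ just computed; by contrast, your plan to match the Toda sum to ``the Painlev\'e~III Hamiltonian expressed through the Lax-pair auxiliary variables'' tacitly requires either building that Lax pair anyway or invoking the Okamoto $\sigma$-form, which you should make explicit. (A small slip: the sign in your divided difference should be $+\,t(y+z)/(y^2z^2)$.)
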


Next, we let $k=n$ and consider the double scaling limit when $n \to \infty$ and $t \to t_{cr}$ simultaneously. We show  that the asymptotics of the Hankel determinant  $D_n[w(z;t)]$ associated with the weight in \eqref{pL polynomials: orthogonality:complex} can be expressed in terms of the tronqu\'{e}e solution $y_\alpha(s)$ of Painlev\'{e} I equation given in \eqref{tronquee-sol}.

\begin{thm}\label{Theorem: Asymptotic of Hankel}
Let the constants $t_{cr}$, $a_{cr}$ and $b_{cr}$ be defined as
 \begin{equation} \label{t-critical}
    t_{cr}=-\frac 34(2^{1/3}-1)^2\approx -0.051
  \end{equation}and
  \begin{equation} \label{a,b, mu critical}
  a_{cr} = \frac{1}{2} (3-2^{1/3} - 2^{2/3})\approx 0.076, \quad b_{cr} = \frac{3}{2} (1 + 2^{1/3} + 2^{2/3})\approx 5.771.
 \end{equation}
For $n\to \infty$ and $t\to t_{cr}$ in a way such that
\begin{equation} \label{s-t-double scaling}
  s^*= \left\{  (2a_{cr})^{-\frac 35}(a_{cr} b_{cr})^{-\frac 12}(b_{cr}-a_{cr})^{\frac 25}\right \}          n^{\frac 45}(t_{cr}-t)
\end{equation}
remains bounded. Suppose $\alpha \in \mathbb{C}$ is fixed and $s^*$ is not a pole of the tronqu\'{e}e solution $y_\alpha(s)$, then an  asymptotic approximation of the logarithmic derivative of the Hankel determinant
$H_{n,n}=t \frac {d}{dt}\log D_n[w; t]$ associated with the weight function \eqref{pL new weight}
 is given by
\begin{equation}\label{introduction: asymptotic of  H}
\frac {d}{dt}H_{n,n}(t)=-\frac{ n^2} 4 \left(\sqrt{\frac {a_{cr}}{b_{cr}}}+\sqrt{\frac {b_{cr}}{a_{cr}}}-2 -\frac 1{n^{\frac 25}} \frac {2(b_{cr}-a_{cr})^{\frac 45}}{(2a_{cr})^{\frac 15}\sqrt{a_{cr}b_{cr}}}y_\alpha(s^*)+O\left (\frac 1 {n^{\frac 3 5}}\right )\right).
\end{equation}
\end{thm}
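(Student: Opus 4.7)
The plan is to apply the Deift--Zhou nonlinear steepest descent method to the $2\times 2$ Fokas--Its--Kitaev RH problem for $Y(z)$, which encodes the monic orthogonal polynomial $\pi_{n,n}(z)$ and its Cauchy transform along the contour $\Gamma$ of \eqref{pL new weight}. First, using the equilibrium measure \eqref{e measure}, one introduces the $g$-function $g(z)=\int_a^b\log(z-s)\,\rho(s)\,ds$ and verifies the Euler--Lagrange conditions $g_+(x)+g_-(x)-V_t(x)=\ell$ on the support $[a,b]$, together with the appropriate variational inequality on $\Gamma\setminus[a,b]$. As $t\downarrow t_{cr}$, the right endpoint $b$ remains a standard soft edge, while the left endpoint degenerates to the critical profile \eqref{e measure-critical} vanishing as $(x-a_{cr})^{3/2}$. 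This higher-order vanishing at $a$ is precisely the signature that calls for a Painlev\'e~I parametrix, rather than Airy, via the Kapaev model of Section~\ref{sec-model Riemann-Hilbert problem}.

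After the standard transformations $Y\mapsto T\mapsto S$ (normalization by $g$ and opening of lenses around $[a,b]$), the jumps of $S$ are exponentially close to the identity off small disks $D(a)$, $D(b)$ and a neighborhood of $[a,b]$. The global parametrix $N$ is built from the Joukowski map associated with $((z-a)(z-b))^{1/2}$, dressed by a Szeg\H{o}-type factor accommodating the rational prefactor $(z+c)/z^2$ in $\rho$. Around $b$, the usual Airy parametrix contributes only an $O(n^{-1})$ correction. Around $a$, one takes $P^{(a)}(z)=E(z)\,\Psi(\zeta(z);s^*)\,e^{-n\phi(z)\sigma_3}$ with $\Psi(\zeta;s)$ the Kapaev model, and chooses the conformal variable $\zeta(z)$ so that $n\phi(z)=\tfrac{4}{5}\zeta^{5/2}+s\,\zeta^{1/2}$ near $a$. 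Expanding $\phi$ in powers of $z-a$ and $t-t_{cr}$ via \eqref{a,b equation}--\eqref{a,b, mu critical}, the $\zeta^{5/2}$-coefficient fixes $\zeta(z)\asymp n^{2/5}(z-a)$ and the $\zeta^{1/2}$-coefficient produces exactly $s=s^*$ as in \eqref{s-t-double scaling}; the analytic prefactor $E(z)$ is then determined by matching with $N$ on $\partial D(a)$ through \eqref{Psi-infty}.

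A small-norm analysis of $R(z):=S(z)P(z)^{-1}$, together with the identities $\Psi_{-1}=-\mathcal{H}_\alpha\sigma_3$ and $(\Psi_{-2})_{12}=\tfrac12 y_\alpha$, yields
\begin{equation*}
R(z)=I+\frac{R^{(1)}(z;s^*)}{n^{2/5}}+O\!\left(n^{-3/5}\right),
\end{equation*}
where $R^{(1)}$ is a rational function whose residues at $a$ and $b$ are explicit multiples of $y_\alpha(s^*)$ and $\mathcal{H}_\alpha(s^*)$. To transfer this into an asymptotic formula for $\tfrac{d}{dt}H_{n,n}$ I use a differential identity in $t$: since $\partial_t V_t=1/z$, Heine's formula gives
\begin{equation*}
\frac{d}{dt}\log D_n[w;t]=-n\int_\Gamma\frac{K_n(z,z)}{z}\,w_t(z)\,dz,
\end{equation*}
which by the Christoffel--Darboux formula and the RH characterization can be rewritten as a residue at $z=0$ of explicit entries of $Y(z)=e^{-n\ell\sigma_3/2}R(z)N(z)e^{n(g(z)-\ell/2)\sigma_3}$. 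Substituting $R=I$ and evaluating $N(0)$ produces the leading term $-\tfrac{n^2}{4}\bigl(\sqrt{a_{cr}/b_{cr}}+\sqrt{b_{cr}/a_{cr}}-2\bigr)$, while the next term, coming from $R^{(1)}(0;s^*)$, carries the $y_\alpha(s^*)$ correction.

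The principal difficulty is the construction and matching of the Painlev\'e~I parametrix $P^{(a)}$ and the precise determination of the numerical constants in \eqref{introduction: asymptotic of  H}. On the one hand, one must verify that $\zeta(z)$ is conformal on a fixed neighborhood of $a_{cr}$ uniformly in $t$ near $t_{cr}$, check the sign and branch conventions ensuring that the jumps of $\Psi$ match those of $S$ on the image of $\Gamma_\Psi$, and extract the scaling prefactor $(2a_{cr})^{-3/5}(a_{cr}b_{cr})^{-1/2}(b_{cr}-a_{cr})^{2/5}$ appearing in \eqref{s-t-double scaling} through careful expansion. On the other hand, propagating the $O(n^{-2/5})$ correction through the differential identity to produce the explicit constant $-\tfrac{2(b_{cr}-a_{cr})^{4/5}}{(2a_{cr})^{1/5}\sqrt{a_{cr}b_{cr}}}$ multiplying $y_\alpha(s^*)$ requires the $\zeta^{-1/2}$-term of \eqref{Psi-infty} together with the local expansion of $N$ at $z=0$; this arithmetic-heavy matching is where the bulk of the work lies.
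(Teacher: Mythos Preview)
Your overall architecture is right, but there are three concrete places where the plan as written will not go through.

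First, the expansion $R(z)=I+R^{(1)}(z;s^*)n^{-2/5}+O(n^{-3/5})$ is incorrect. Because $\zeta(z)\asymp n^{2/5}$ and the large-$\zeta$ expansion \eqref{Psi-infty} of $\Psi$ proceeds in powers of $\zeta^{-1/2}$, the jump $P^{(a)}N^{-1}$ on $\partial D(a)$ is $I+J^{(1)}n^{-1/5}+J^{(2)}n^{-2/5}+\cdots$, with $J^{(1)}$ built from $\Psi_{-1}=-\mathcal{H}_\alpha\sigma_3$ and $J^{(2)}$ from $\Psi_{-2}$. Hence $R$ has a nontrivial $n^{-1/5}$ term carrying $\mathcal{H}_\alpha(s^*)$, and the $y_\alpha(s^*)$ contribution sits at order $n^{-2/5}$. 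The fact that the final formula \eqref{introduction: asymptotic of  H} has its first subleading correction at $n^{-2/5}$ is a \emph{cancellation}: in the product $(RN)_{12}(0)\,(RN)_{21}(0)$ the $n^{-1/5}$ cross terms involving $\mathcal{H}_\alpha$ drop out. You must compute both $R^{(1)}$ and $R^{(2)}$ and track this cancellation explicitly; as stated, your plan misses a full order.

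Second, no Szeg\H{o} factor is needed in the global parametrix. The rational prefactor $(x+c)/x^2$ belongs to the equilibrium \emph{density}, not to the weight; the weight $e^{-nV_t}$ is analytic on $\Gamma$ away from the origin and the $g$-function already absorbs the potential. The paper's $N(z)$ is simply $M^{-1}\varrho(z)^{-\sigma_3}M$ with $\varrho(z)=((z-b)/(z-a_{cr}))^{1/4}$. Dressing $N$ with an extra scalar factor will spoil the matching arithmetic and the value $N(0)$ that feeds into the leading term.

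Third, working with the $t$-dependent equilibrium measure \eqref{e measure} means the left endpoint $a(t)$ moves with $t$, which makes it awkward to build a conformal map near $a$ that is uniform as $t\to t_{cr}$. The paper instead uses a \emph{signed} equilibrium measure with the left endpoint pinned at $a_{cr}$; only the right endpoint $b(t)$ varies. This is what makes $\zeta(z)=(\tfrac54\phi_{cr}(z))^{2/5}$ a fixed conformal map and pushes all the $t$-dependence into the second variable $s=n^{4/5}q(z)$ of $\Psi$, producing exactly \eqref{s-t-double scaling}. Your plan should adopt this device. Finally, the differential identity you cite via Heine and Christoffel--Darboux is correct in spirit, but the paper uses the sharper form $\tfrac{d}{dt}H_{n,n}(t)=-n^2\,Y_{12}(0)Y_{21}(0)$ directly, which reduces the endgame to evaluating $(R(0)N(0))_{12}(R(0)N(0))_{21}$.
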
\vskip .5cm

We would also derive the double scaling limit of the recurrence coefficients and the leading coefficients of the orthonormal polynomials.

\begin{thm}\label{Theorem: Asymptotic of recurrence coff}
Under the same conditions as in the previous theorem,
         {  the monic polynomial $\pi_{n,n}(z)$ defined in \eqref{pL polynomials: orthogonality:complex} exists for large enough $n$   and $t$ close to $t_{cr}$}. Moreover,
we have the asymptotics of the recurrence coefficients
\begin{equation}\label{introduction:a-n asymtotics}
   a_{n,n}=\frac {t}{\sqrt{a_{cr}b_{cr}}}\left(1 -\frac{2^{4/5} y_\alpha(s^*)}{a_{cr}^{1/5}(b_{cr}-a_{cr})^{1/5}n^{2/5}} +O\left (\frac 1 {n^{3/5}}\right )\right),
\end{equation}
\begin{equation}\label{introduction:beta asymtotics}
   \beta_{n,n}=\frac {(b_{cr}-a_{cr})^2}{16}-\frac {(2a_{cr}(b_{cr}-a_{cr}))^{4/5}y_\alpha(s^*)}{4}\frac {1}{n^{2/5}}+O\left (\frac 1 {n^{3/5}}\right )
\end{equation}
and
\begin{equation}\label{leading coefficient asymtotics-one term}
   \gamma_{n,n}^2=\frac {2}{\pi (b_{cr}-a_{cr})}e^{-nl}\left(1+\frac {2(2a_{cr})^{4/5}\mathcal{H}_\alpha(s^*)}{(b_{cr}-a_{cr})^{1/5}n^{1/5}}+O\left (\frac 1 {n^{2/5}}\right )\right),
\end{equation}
where $\mathcal{H}_\alpha(s)$ is the Hamiltonian of Painlev\'{e} I given in \eqref{P1-Hamilton}.
\end{thm}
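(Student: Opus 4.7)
The plan is to carry out a Deift--Zhou nonlinear steepest descent analysis of the Fokas--Its--Kitaev Riemann--Hilbert (RH) problem for the non-Hermitian orthogonal polynomials defined by \eqref{pL polynomials: orthogonality:complex}, and to read off the three asymptotic statements from the large-$z$ expansion of the final small-norm solution. First I would write down the standard $2\times 2$ RH problem for $Y(z)$ with jump $\bigl(\begin{smallmatrix}1 & w_t\\ 0 & 1\end{smallmatrix}\bigr)$ on $\Gamma$ and normalization $Y(z)=(I+O(1/z))\,\mathrm{diag}(z^n,z^{-n})$ at infinity. The first transformation $Y\mapsto T$ uses the $g$-function built from the equilibrium measure $\rho(x)\,dx$ of \eqref{e measure}, whose endpoints $a=a(t)$, $b=b(t)$ satisfy \eqref{a,b equation}; near $t_{cr}$ the density degenerates as in \eqref{e measure-critical} with a phase transition at $x=a$. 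The second transformation $T\mapsto S$ opens lenses around $[a,b]$ in the usual way.

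Next I would build the global parametrix $P^{(\infty)}(z)$ from the two-sheeted Riemann surface of $\sqrt{(z-a)(z-b)}$, so that it carries the correct jumps on $[a,b]$ and the prescribed behavior at infinity; the quantity $e^{-nl}$ in \eqref{leading coefficient asymtotics-one term} (where $l$ is the Lagrange multiplier of the equilibrium problem) appears naturally from this construction. At the right endpoint $b$, where the soft edge is regular, I would install the standard Airy parametrix in a small disk $U_b$. The crucial step is the construction of the local parametrix in a disk $U_a$ around $a$: because $\rho(x)\sim (x-a)^{3/2}$ at criticality, the correct model is Kapaev's Painlev\'e~I parametrix $\Psi(\zeta;s)$ from Section~1.1. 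I would introduce a conformal map $\zeta=\zeta(z)$ from $U_a$ onto a neighborhood of $0$ chosen so that $\tfrac{4}{5}\zeta^{5/2}+s\zeta^{1/2}$ matches (up to sign and constants) the integral $n\!\int^z\!\bigl(V_t'(\xi)-2g'(\xi)\bigr)d\xi/2$; the rescaling is exactly the one encoded in \eqref{s-t-double scaling}, which identifies $s$ at the critical point with $s^\ast$. The constants $\alpha$ and $1-\alpha$ in the weight \eqref{pL new weight} translate into the Stokes multipliers $s_{\pm 1}$ of the Painlev\'e~I model, matching the RH data of $\Psi(\zeta;s)$.

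With parametrices in hand, I would define $R(z)=S(z)P(z)^{-1}$, where $P$ equals $P^{(\infty)}$ outside $U_a\cup U_b$ and the local parametrices inside. The matching condition on $\partial U_a$ and $\partial U_b$, combined with the asymptotic behavior \eqref{Psi-infty}, yields a small-norm RH problem with jump $I+\Delta(z)$ of size $O(n^{-1/5})$ on $\partial U_a$ (dominant) and $O(n^{-1})$ on $\partial U_b$. Standard small-norm theory then gives
\[
R(z)=I+\frac{R^{(1)}(z)}{n^{1/5}}+\frac{R^{(2)}(z)}{n^{2/5}}+\cdots,
\]
with each $R^{(k)}$ expressed explicitly by residues of the mismatch on $\partial U_a$ involving $\Psi_{-1}(s^\ast)=-\mathcal{H}_\alpha(s^\ast)\sigma_3$ and $\Psi_{-2}(s^\ast)=\tfrac12\bigl(\mathcal{H}_\alpha^2 I+y_\alpha\sigma_1\bigr)$. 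The existence of $\pi_{n,n}$ for large $n$, asserted in the theorem, is then an immediate consequence of the solvability of this small-norm problem, which in turn is guaranteed as long as the Painlev\'e~I parametrix exists, i.e.\ $s^\ast$ is not a pole of $y_\alpha$.

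Finally, unraveling the chain $R\to S\to T\to Y$ and extracting the coefficients of $1/z$ and $1/z^2$ in $Y(z)\,\mathrm{diag}(z^{-n},z^n)$ near infinity produces $\alpha_{n,n}$, $\beta_{n,n}$, and $\gamma_{n,n}^2$ via the standard identities; the leading (equilibrium) pieces give the constants $t/\sqrt{a_{cr}b_{cr}}$, $(b_{cr}-a_{cr})^2/16$, and $2/[\pi(b_{cr}-a_{cr})]\,e^{-nl}$, while the subleading corrections inherit the $y_\alpha(s^\ast)$ and $\mathcal{H}_\alpha(s^\ast)$ factors from $\Psi_{-2}$ and $\Psi_{-1}$, yielding exactly \eqref{introduction:a-n asymtotics}--\eqref{leading coefficient asymtotics-one term}. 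The main obstacle, as usual in such problems, is the construction of the Painlev\'e~I parametrix at $a$: one must choose the conformal map, the prefactor matching $P^{(\infty)}$, and the branches carefully so that the matching jump on $\partial U_a$ is genuinely of size $O(n^{-1/5})$, and so that the Stokes data of $\Psi(\zeta;s)$ are consistent with the contour structure of $\Gamma$ and the weights $c_2=\alpha$, $c_3=1-\alpha$. The Airy parametrix at $b$ and the small-norm estimates are routine by comparison.
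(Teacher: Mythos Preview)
Your overall architecture is right---Deift--Zhou steepest descent, Airy parametrix at $b$, a Painlev\'e~I parametrix at the left edge, and a small-norm problem for $R$---and the mechanism by which $y_\alpha(s^\ast)$ and $\mathcal H_\alpha(s^\ast)$ enter through $\Psi_{-1},\Psi_{-2}$ is exactly what happens in the paper. But there is one genuine gap in your plan, concerning the $g$-function.

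You propose to build $g$ from the \emph{regular} equilibrium measure $\rho(x)\,dx$ of \eqref{e measure}, with $t$-dependent endpoints $a(t),b(t)$. The paper does not do this; it replaces $\rho$ by a \emph{signed} minimizer on $[a_{cr},\infty)$ whose left endpoint is pinned at $a_{cr}$ (Section~\ref{sec-E-measure}, eq.~\eqref{E-measure-modified}), following Duits--Kuijlaars and Bleher--Dea\~no. This is not cosmetic. With the regular measure the phase integral near the moving edge has the local form
\[
\phi_t(z)\;\sim\;C_1\,(a+c)\,(z-a)^{3/2}\;+\;C_2\,(z-a)^{5/2}+\cdots,
\]
because $\rho(x)\sim\tfrac{a+c}{2\pi a^2}\sqrt{b-a}\,(x-a)^{1/2}$ there. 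Under a conformal map $\zeta$ with $\zeta(a)=0$ this is a $\zeta^{5/2}+\tilde s\,\zeta^{3/2}$ structure, \emph{not} the Painlev\'e~I phase $\theta(\zeta,s)=\tfrac45\zeta^{5/2}+s\,\zeta^{1/2}$ of \eqref{theta-def}, so your sentence ``choose $\zeta$ so that $\tfrac45\zeta^{5/2}+s\zeta^{1/2}$ matches $n\!\int(V_t'-2g')/2$'' cannot be carried out as stated. Pinning the left endpoint at $a_{cr}$ produces instead $\phi_t(z)=\phi_{cr}(z)+(t-t_{cr})\phi_0(z)$ with $\phi_{cr}\sim (z-a_{cr})^{5/2}$ and $\phi_0\sim (z-a_{cr})^{1/2}$ (eqs.~\eqref{phi-t-phi-critical}--\eqref{phi-critical function at a}), which is exactly what one needs to write $\theta(n^{2/5}f(z),n^{4/5}q(z))=-n\phi_t(z)$ and install $\Psi$. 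Once you make this modification, your outline coincides with the paper's proof.

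One smaller point: $\beta_{n,n}$ and $\gamma_{n,n}^2$ are indeed read off from the $1/z$ coefficient of $Y(z)z^{-n\sigma_3}$ at infinity, but for $a_{n,n}$ the paper does not use the large-$z$ expansion. It uses the differential identity $a_{n,n}=2\pi i\,t\,\gamma_{n,n}^2\,Y_{11}(0)Y_{12}(0)$ from Lemma~\ref{lem:differential identity}, so one also needs the behaviour of $R(z)N(z)$ at $z=0$ (eq.~\eqref{R-0 N-0 estimate}). Extracting $\alpha_{n,n}$ purely from infinity is possible in principle but requires one more order in the $1/z$ expansion than you indicate.
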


\begin{rmk}
  It is well-known that the tronqu\'ee solutions of Painlev\'e I are meromorphic functions and possess infinitely many poles in the complex plane. Therefore, to make the results valid in the above theorems, we require the $s^*$ in \eqref{s-t-double scaling} is bounded away from the poles of $y_\alpha(s)$. Recently, through a more delicate \emph{triple scaling limit}, Bertola and Tovbis \cite{Ber:Tov} successfully obtain the asymptotics near the poles of $y_\alpha(s)$.
  Similar results near the poles of $y_\alpha(s)$ might be derived by using their ideas in \cite{Ber:Tov}. However, we do not pursue that part. Instead, we focus on  the main task of the present  paper  to demonstrate that the Painlev\'e I asymptotics can also occur for the weight \eqref{pL weight} with negative $t$.
\end{rmk}

The rest of the paper is arranged as follows. In Section \ref{finite determinants}, we provide a RH problem for the  orthogonal polynomials with respect to the weight \eqref{pL new weight}. A transformed version of the solution is shown to fulfill a Lax pair, which is closely related to the Painlev\'{e} III equation. Several differential identities are stated and justified.  Theorem \ref{Theorem: Hankel as tau function} is also proved in this section.     Section \ref{sec-E-measure} is devoted to the determination of equilibrium measures, involving  a positive measure and a signed measure. In Section \ref{sec-RH-analysis},
we carry out  a nonlinear steepest descent analysis of the RH problem for the orthogonal polynomials. Particular attention will be paid to  the construction of the local parametrix at the critical endpoint $z=a_{cr}$, where the Painlev\'e I transcendents are involved.  Then,  the proofs of Theorems \ref{Theorem: Asymptotic of Hankel} and \ref{Theorem: Asymptotic of recurrence coff} are given in the last section, Section \ref{sec-proofs}.

%%%%%%%%%%%%%%%%%%%%%%%%%%%%%%%%%%%%%%%%%%%%%%%%%%%%%%%%%%%%%%%%%%%%%%%%%%%%%%%%%%%%%%%%%%%

\section{Finite Hankel determinants and Painlev\'{e} III equation} \label{finite determinants}

In this section, we state the RH  problem for the perturbed Laguerre orthogonal polynomials. Then we show that after some elementary transformations, the RH problem is transformed into a RH problem for the Painlev\'{e} III equation.  As a consequence, we  derive a Painlev\'{e} III equation satisfied by the recurrence coefficient $\alpha_{k,n}$ up to a  translation,   and establish a relation between the finite Hankel determinant of the perturbed Laguerre weight in \eqref{pL polynomials: orthogonality:complex} with the $\tau$-function of this  Painlev\'{e} III equation. Several differential identities for the Hankel determinants and the recurrence coefficients of the perturbed Laguerre orthogonal polynomials are also derived. The identities  are important in the asymptotic analysis  in  later sections. Although our calculations are similar to those in Chen and Its \cite{ci}, we think it is convenient for the reader to have more details.

\subsection{Riemann-Hilbert problem for orthogonal polynomials and differential identities} \label{sec-rhp-OPs}

We state the   RH  problem for the perturbed Laguerre orthogonal polynomials as follows:
\begin{itemize}
  \item[(Y1)]  $Y(z)$ is analytic in
  $\mathbb{C}\backslash \Gamma_j$, $j=1,2,3$; see Figure \ref{contour-ortho};

  \item[(Y2)] $Y(z)$  satisfies the jump condition
  \begin{equation}\label{Y-jump}
  Y_+(z)=Y_-(z) \left(
                               \begin{array}{cc}
                                 1 & w(z; t) \\
                                 0 & 1 \\
                                 \end{array}
                             \right),
    \qquad z\in \Gamma= \Gamma_1\cup \Gamma_2\cup\Gamma_3,
    \end{equation}
    where $w(z; t)$ is the weight function piecewise-defined in \eqref{pL new weight};

  \item[(Y3)]  The asymptotic behavior of $Y(z)$  at infinity is
  \begin{equation}\label{Y-infty}
  Y(z)=\left (I+O\left (  1 /z\right )\right )\left(
                               \begin{array}{cc}
                                 z^k & 0 \\
                                 0 & z^{-k} \\
                               \end{array}
                             \right),\quad \mbox{as}\quad z\rightarrow
                             \infty ;\end{equation}
{  \item[(Y4)]  As $z\to 0$, $Y(z) = O(1)$.}
  \end{itemize}

Using a by now standard argument, originally due to Fokas, Its, and Kitaev \cite{fik1992}, the solution of the above RH problem, if it exists, is uniquely given by
\begin{equation}\label{Y-solution}
Y(z)= \left (\begin{array}{cc}
\pi_k(z)& \frac 1 {2\pi i}
 \int_{\Gamma} \frac {\pi_k(s) w(s; t) }{s-z} ds\\[0.2cm]
-2\pi i \gamma_{k-1}^2 \;\pi_{k-1}(z)& -   \gamma_{k-1}^2\;
 \int_{\Gamma} \frac {\pi_{k-1}(s) w(s; t) }{s-z} ds \end{array} \right ),
\end{equation}
where $\pi_k(z)=\pi_{k, n}(z)$ is the monic perturbed Laguerre orthogonal polynomials defined in \eqref{pL polynomials: orthogonality:complex} and $\gamma_k=\gamma_{k,n}(t)$  is the leading coefficient  for the
orthonormal  polynomial  of degree $k$.
        {  To show the existence of $\pi_{n, n}(z)$ when $n$ is large enough, we will apply a series of invertible transformations to transform the original RH problem $Y$ to a new RH problem for $R$, which is solvable for sufficiently large $n$, $t$ close to $t_{cr}$ as in \eqref{s-t-double scaling}, and $s^*$ is not a pole of the tronqu\'{e}e solution $y_\alpha(s)$. Tracing back the invertible transformations,}  we will see that the RH problem is solvable {  under the same conditions. Indeed, it is also possible to prove the solvability for $t\geq t_{cr}$. However, since we are interested in the phase transition near $t_{cr}$, we don't consider the case when $t_{cr} < t < 0$ in the subsequent analysis.}   Thus,  the perturbed Laguerre orthogonal polynomials are well-defined for $k=n$ large enough.

Next, we derive some differential identities  for the recurrence coefficients and the logarithmic derivative of the Hankel determinant associated
with the perturbed Laguerre weight $w(z)=w(z; t)$ in  \eqref{pL new weight}. The results are expressed    in terms of the entries of $Y(z)$.

\begin{lem}\label{lem:differential identity}{  Assume that $t>0$.}
Let $\alpha_{k,n}(t)$ and $\beta_{k,n}(t)$ be the recurrence coefficients in \eqref{recurrence relation}, and $D_k[w;t]$ be the Hankel determinant in \eqref{Hankel}. Define
\begin{equation}\label{a-n and alpha-n}
      a_{k,n}(t)= \alpha_{k,n}(t)-\frac 1n(2k+1+n)
\end{equation}
and
\begin{equation}\label{logrithmic derivative of hankel}
     H_{k,n}(t)=t \frac {d}{dt} \log D_k[w;t].
\end{equation}
Then we have
\begin{equation} \label{a-n in terms of  Y}
   a_{k,n}(t)=t\gamma_{k,n}^2\int_\Gamma\frac{\pi _{k,n}^2(z)w(z; t)}{z}dz=2\pi i t\gamma_{k,n}^2(t) Y_{11}(0)Y_{12}(0),
\end{equation}
\begin{equation}\label{beta and H}
 \beta_{k,n}(t)=\frac 1{n^2} \biggl[ k(k+n)+ t\frac d{dt} H_{k,n}(t)-H_{k,n}(t) \biggr]
\end{equation}
and
\begin{equation}\label{differential identity}
    \frac d{dt}H_{k,n}(t)=n^2\gamma_{k-1,n}^2\int_\Gamma\frac{\pi _{k-1,n}(z)\pi _{k,n}(z)w(z; t)}{z}dz=-n^2 Y_{12}(0)Y_{21}(0).
\end{equation}

\end{lem}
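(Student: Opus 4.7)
The plan is to prove the three identities using orthogonality of $\pi_{k,n}$ against polynomials of degree less than $k$, the three-term recurrence \eqref{recurrence relation}, and the two elementary relations $\partial_t w(z;t) = -nw(z;t)/z$ and $\partial_z w(z;t) = -nV_t'(z)w(z;t)$ with $V_t'(z) = 1 - 1/z - t/z^2$. For $t>0$, all boundary contributions in an integration-by-parts on $\Gamma=(0,\infty)$ vanish, thanks to the exponential decay at $0$ (from $e^{-nt/z}$) and at $\infty$ (from $e^{-nz}$). I will prove \eqref{a-n in terms of Y} first, then \eqref{differential identity}, and finally deduce \eqref{beta and H} algebraically.

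For \eqref{a-n in terms of Y}, I would start from $\int_\Gamma \frac{d}{dz}[z\pi_{k,n}^2(z)w(z;t)]\,dz = 0$. Using $\int z\pi_{k,n}\pi_{k,n}' w\,dz = k\,h_{k,n}$ (because $z\pi_{k,n}' = k\pi_{k,n}+(\text{polynomial of degree} < k)$, and orthogonality annihilates the lower-degree piece against $\pi_{k,n}$), together with $\int z\pi_{k,n}^2 w\,dz = \alpha_{k,n} h_{k,n}$ from the recurrence, one obtains
\[ (2k+1+n)\,h_{k,n} - n\alpha_{k,n}\,h_{k,n} + nt\int_\Gamma \pi_{k,n}^2\,\frac{w}{z}\,dz = 0, \]
which rearranges to the first equality. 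For the second, decompose $\pi_{k,n}(s)/s = \pi_{k,n}(0)/s + r(s)$ with $r$ a polynomial of degree $k-1$; orthogonality eliminates $r$ in $\int \pi_{k,n}^2 w/s\,ds$, while \eqref{Y-solution} gives $\pi_{k,n}(0) = Y_{11}(0)$ and $\int_\Gamma \pi_{k,n}(s)\,w(s;t)/s\,ds = 2\pi i\, Y_{12}(0)$.

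For \eqref{differential identity}, the second equality applies the same decomposition to $\pi_{k-1,n}(s)/s = \pi_{k-1,n}(0)/s + p(s)$ with $\deg p \leq k-2$: orthogonality of $\pi_{k,n}$ kills $p$, leaving $\pi_{k-1,n}(0)\cdot 2\pi i\, Y_{12}(0)$, and $\pi_{k-1,n}(0) = -Y_{21}(0)/(2\pi i\gamma_{k-1,n}^2)$ from \eqref{Y-solution} yields $-Y_{12}(0)Y_{21}(0)$. For the first equality, differentiate $h_{k,n} = \int\pi_{k,n}^2 w\,dz$ in $t$: orthogonality kills the $\partial_t\pi_{k,n}$ contribution (a polynomial of degree $<k$), producing $\partial_t\log h_{k,n} = -(n/t)\,a_{k,n}$ via \eqref{a-n in terms of Y}, and summing over $j=0,\ldots,k-1$ gives $H_{k,n}(t) = -n\sum_{j=0}^{k-1}a_{j,n}(t)$. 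Next, differentiate the recurrence \eqref{recurrence relation} in $t$; writing $\partial_t\pi_{j,n} = \sum_{i<j}b_{j,i}\pi_{i,n}$ with $b_{j,i} = n\gamma_{i,n}^2\int\pi_{i,n}\pi_{j,n}w/z\,dz$ (derived by differentiating $\int\pi_{i,n}\pi_{j,n}w\,dz = 0$ for $i<j$), and matching the $\pi_{j,n}$-coefficient on both sides yields the Toda-type identity
\[ a_{j,n}'(t) = n\gamma_{j-1,n}^2\int_\Gamma \pi_{j-1,n}\pi_{j,n}\,\frac{w}{z}\,dz - n\gamma_{j,n}^2\int_\Gamma \pi_{j,n}\pi_{j+1,n}\,\frac{w}{z}\,dz. \]
Summing in $j$ telescopes (the $j=0$ term vanishes since $\pi_{-1,n}\equiv 0$) to $\frac{d}{dt}H_{k,n} = n^2\gamma_{k-1,n}^2\int\pi_{k-1,n}\pi_{k,n}w/z\,dz$.

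For \eqref{beta and H}, apply $\int_\Gamma \frac{d}{dz}[z\pi_{k-1,n}\pi_{k,n}w]\,dz = 0$. Orthogonality kills $\int\pi_{k-1,n}\pi_{k,n}w\,dz = 0$ and $\int(z\pi_{k-1,n}')\pi_{k,n}w\,dz = 0$ (since $z\pi_{k-1,n}'$ has degree $k-1$), while the recurrence gives $\int z\pi_{k-1,n}\pi_{k,n}w\,dz = \beta_{k,n}h_{k-1,n}$. The remaining term uses the expansion $z\pi_{k,n}'(z) = k\pi_{k,n}(z) + s_k\pi_{k-1,n}(z)+(\text{lower})$, where $s_k = \sum_{j=0}^{k-1}\alpha_{j,n}$ is obtained by iterating the subleading-coefficient identity $c_{k}^{(k+1)} = c_{k-1}^{(k)} - \alpha_{k,n}$ (matching $z^k$ terms in $z\pi_k=\pi_{k+1}+\alpha_k\pi_k+\beta_k\pi_{k-1}$). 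Assembling the above produces
\[ s_k h_{k-1,n} - n\beta_{k,n}h_{k-1,n} + nt\int_\Gamma \pi_{k-1,n}\pi_{k,n}\,\frac{w}{z}\,dz = 0, \]
i.e.\ $n\beta_{k,n} = \sum_{j=0}^{k-1}\alpha_{j,n} + nt\gamma_{k-1,n}^2\int\pi_{k-1,n}\pi_{k,n}w/z\,dz$. Substituting $\sum_{j=0}^{k-1}\alpha_{j,n} = -H_{k,n}/n + k(k+n)/n$ (using $H_{k,n} = -n\sum a_{j,n}$ and $\sum_{j=0}^{k-1}(2j+1+n)/n = k(k+n)/n$) and $\gamma_{k-1,n}^2\int\pi_{k-1,n}\pi_{k,n}w/z\,dz = H_{k,n}'/n^2$ from \eqref{differential identity} converts this into \eqref{beta and H}. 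The main technical obstacle is the bookkeeping of the subleading coefficient $s_k$ in $z\pi_{k,n}'$ via iteration of the recurrence; every other step is a routine application of orthogonality and the basic integration-by-parts identities above.
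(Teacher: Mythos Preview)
Your proposal is correct and follows essentially the same strategy as the paper: the identity \eqref{a-n in terms of  Y} via integration by parts of $z\pi_{k,n}^2 w$ and partial fractions, and \eqref{beta and H} via integration by parts of $z\pi_{k-1,n}\pi_{k,n}w$ combined with tracking the subleading coefficient (your $s_k$ is exactly the paper's $-\mathfrak{p}_{k,n}$). The one organizational difference is in \eqref{differential identity}: the paper telescopes $\sum_{j}\alpha_{j,n}=-\mathfrak{p}_{k,n}$ first and then differentiates the single relation $\int\pi_{k,n}\pi_{k-1,n}w=0$ to obtain $\mathfrak{p}_{k,n}'$ directly, whereas you differentiate the full recurrence to get the Toda-type identity $\alpha_j'=b_{j,j-1}-b_{j+1,j}$ and telescope afterwards; both routes are valid and yield the same result.
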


\begin{proof}{  Since $t>0$, the   orthogonal polynomials $\pi_{k,n}$ exist for all nonnegative $k$ and positive $n$.}   First,
we consider the recurrence coefficient $\alpha_{k,n}(t)$. Based on the three-term recurrence relation \eqref{recurrence relation} and the orthogonality condition \eqref{pL polynomials: orthogonality:complex}, we get
\begin{equation}
      \alpha_{k,n}(t)=\gamma_{k,n}^2(t)\int_\Gamma z\pi_{k,n}^2 (z) w(z)dz.
\end{equation}
Using the fact that $w(z) = \frac{w(z)}{z} + \frac{tw(z)}{z^2} - \frac{w'(z)}{n}$ and integrating  by part once, the above formula gives us
\begin{equation} \label{akn-integral}
      a_{k,n}(t)=t\gamma_{k,n}^2(t)\int_\Gamma\frac{\pi _{k,n}^2(z)w(z)}{z}dz.
\end{equation}
Then \eqref{a-n in terms of  Y} follows from a partial fraction decomposition of {  $\frac {\pi_{k, n}(z)}z$}, the orthogonality condition \eqref{pL polynomials: orthogonality:complex},  and the explicit expression of $Y(z)$ in \eqref{Y-solution}.

Next, we consider the Hankel determinant. Recall that  the Hankel determinant can be expressed in terms of the leading coefficients as
\begin{equation*}
  D_k[w;t] = \prod_{j=0}^{k-1} \gamma_{j,n}^{-2}(t);
\end{equation*}see \eqref{D-n and leading coeff}.
Taking logarithmic derivative of both sides of the above equation with respect to $t$ and using the integral representation of the leading
coefficients in \eqref{pL polynomials: orthogonality:complex}, we get
 \begin{equation}\label{H and a-n}
 H_{k,n}(t)=-n\sum_{j=0}^{k-1}a_{j,n}(t).
\end{equation}
Differentiating the above formula again,  we get from \eqref {a-n and alpha-n}
 \begin{equation}\label{H derivative and alpha}
    \frac d{dt}H_{k,n}(t)=-n \frac d{dt}\sum_{j=0}^{k-1}\alpha_{j,n}(t).
   \end{equation}
Let $\mathfrak{p}_{k,n}(t)$ be the  coefficient of the $z^{k-1}$ term in $\pi_{k,n}(z)$, i.e.,
\begin{equation} \label{pkn-formula}
  \pi_{k,n}(z)=z^k+\mathfrak{p}_{k,n}(t)z^{k-1}+ \cdots.
\end{equation}
Comparing the $x^k$ powers in the recurrence relation \eqref{recurrence relation}, we obtain
\begin{equation}\label{p and alpha-n}
   \alpha_{k,n}=\mathfrak{p}_{k,n}(t)-\mathfrak{p}_{k+1,n}(t).
\end{equation}
To derive $\frac d{dt}H_{k,n}(t)$, one can see from \eqref{H derivative and alpha} and \eqref{p and alpha-n} that it is sufficient to obtain $\frac d{dt} \mathfrak{p}_{k,n}(t)$. This can be done by taking derivative of the following orthogonal formula with respect to the parameter $t$
\begin{equation*}
  \int_{\Gamma}\pi_{k,n}(z)\pi_{k-1,n}(z)w(z)dz=0.
\end{equation*}
More precisely, taking into account the orthogonal relation \eqref{pL polynomials: orthogonality:complex} and the fact that $\frac {\partial w(z; t)} {\partial t}=-\frac n z w(z; t)$,  we have
 \begin{equation}\label{p derivative}
 \frac d{dt} \mathfrak{p}_{k,n}(t)=n\gamma_{k-1,n}^2\int_{\Gamma} \frac {\pi_{k,n}(z)\pi_{k-1,n}(z)w(z)}{z}dz.
 \end{equation}
Then, \eqref{differential identity} follows from a combination of \eqref{H derivative and alpha}, \eqref{p and alpha-n} and  \eqref{p derivative}, as well as the definition of $Y(z)$ in \eqref{Y-solution}.

Finally, let us study $\beta_{k,n}(t)$. Using the  ideas leading to \eqref{akn-integral}, we have
\begin{equation}\label{beta integrate by part}
     \beta_{k,n}(t)=t\gamma_{k-1,n}^2\int_{\Gamma} \frac {\pi_{k,n}(z)\pi_{k-1,n}(z)w(z)}{z}dz-\frac  1n \mathfrak{p}_{k,n}(t),
   \end{equation}
where $\mathfrak{p}_{k,n}(t)$ is introduced  in \eqref{pkn-formula}. The first term on the right-hand side is $\frac{t}{n^2}\frac d{dt} H_{k,n}(t)$; cf. \eqref{differential identity}. An expression for the term on the extreme right can be obtained by deriving   $\mathfrak{p}_{k,n}(t)=\sum_{j=0}^{k-1} \alpha_{j,n}$ from  \eqref{p and alpha-n},  and using \eqref{a-n and alpha-n} and \eqref{H and a-n}.

This completes the proof of our lemma.
\end{proof}

 {
 \begin{rmk}
For later use, we need the differential identities   of   Lemma \ref{lem:differential identity}   in the case when $k=n$ is large and   $t\sim t_{cr}$.
  They can be obtained through
 an analytic continuation argument.
Indeed, $Y(z)$ determined by RH problem  exists in this case, and  is related to the $\Psi$-function of the third Painlev\'{e} equation after an elementary transformation given in \eqref{Psi function-painleve III}.
Thus $Y(z)$ is meromorphic with respect to $t$ in the cut plane $\arg(-t) \in (- \pi/2, 3\pi/2)$. In particular, both $Y$ and the Hankel determinant are analytic in a domain
containing the interval  $t>0$ and a neighborhood of $t=t_{cr}$.   Note  that the identities \eqref{a-n in terms of  Y}-\eqref{differential identity} hold for $t>0$, then, by analytic continuation, they also hold
for $t$ close to $t_{cr}$.
We conclude that for  $k=n$   large and   $t\sim t_{cr}$, the identities  \eqref{a-n in terms of  Y}-\eqref{differential identity} are also true.
Similar
argument
has previously been used in
  Bleher and Dea\~{n}o \cite{Ble:Dea1}.
\end{rmk}  }

\subsection{Relation to the Painlev\'{e} III equation} \label{subsec-painleve III}

Introduce a purely imaginary parameter $s= n  i \sqrt{-t}$, and define
\begin{equation}\label{Psi function-painleve III}
\Phi(\lambda,s)=\left (\frac {n i}s\right )^{(\frac n2 +k)\sigma_3}Y\left (\frac {s\lambda}{n i}\right )e^{\frac i2 (s\lambda-\frac s{\lambda}) \sigma_3}\left (\frac {s\lambda}{n i}\right )^{\frac n2\sigma_3}, ~~\lambda\not\in
\Gamma^*, \end{equation} where $\Gamma^*= \Gamma^*_1 \cup    \Gamma^*_2 \cup  \Gamma^*_3  = \frac{1}{\sqrt{-t}} \Gamma$ is the rescaled contour.
Then, $\Phi(\lambda) =\Phi(\lambda,s)$ solves   the following RH problem with constant jumps:
\begin{itemize}
  \item[(i)] $\Phi(\lambda)$ is analytic for $\lambda \in \mathbb{C} \setminus \cup_{j=1}^3\Gamma^*_j$.
  As $\Gamma^*$ and $\Gamma$ only differ by a scale, one may refer to    Figure \ref{contour-ortho}      to see the properties  of the contour $\Gamma^*$.

  \item[(ii)] $\Phi(\lambda)$  satisfies the jump condition
  \begin{equation}\label{Phi-Painleve iii-jump}
  \Phi_+(\lambda)=\Phi_-(\lambda) \left(
                               \begin{array}{cc}
                                 1 & c_j \\
                                 0 & 1 \\
                                 \end{array}
                             \right),
    \quad \lambda \in \Gamma^*_j,~j=1,2,3,
    \end{equation}
    where $c_1=1,c_2=\alpha,c_3=1-\alpha$; cf. \eqref{pL new weight}.

  \item[(iii)] The asymptotic behavior of $\Phi(\lambda)$ at infinity is
  \begin{equation}\label{Phi-Painleve III-infty}
  \Phi(\lambda)=\left (I+\sum_{k=1}^{\infty}\frac {\Phi_{-k}}{\lambda^k}\right )\lambda^{(\frac n2+k)\sigma_3}e^{\frac {is\lambda}2\sigma_3} \quad \mbox{as}\quad\lambda\rightarrow \infty ,
  \end{equation}
  where
  \begin{equation}\label{Phi-Painleve III-infty-coeff}
  \Phi_{-1}=\frac{n i }s \left(\frac {n i}s\right)^{(\frac n2+k)\sigma_3}\left(
  \begin{array}{cc}
  \mathfrak{p}_{k,n}(t) -\frac {s^2}{2n}& -\frac {1}{2\pi i\gamma_{k,n}(t)^2} \\
     -2\pi i \gamma_{k-1,n}(t)^2 & - \mathfrak{p}_{k,n}(t)+\frac {s^2}{2n} \\
   \end{array}
   \right)
   \left(\frac{ n i}s\right)^{-(\frac n2+k)\sigma_3},
   \end{equation}
   In the above formula, $\gamma_{k,n}$ and $\mathfrak{p}_{k,n}$ are, respectively,    the leading coefficient  of the $k$-th orthonormal polynomial,   and the
    coefficient  of the
    $z^{k-1}$ term in the
    $k$-th  monic orthogonal polynomial introduced in \eqref{pkn-formula},   with respect  to   the varying perturbed Laguerre weight in \eqref{pL new weight} and \eqref{pL polynomials: orthogonality:complex}.

\item[(iv)] The asymptotic behavior of $\Phi(\lambda)$ at $\lambda=0$ is
  \begin{equation}\label{Phi-Painleve III-zero}
  \Phi(\lambda)=\Phi(0)\left (I+\sum_{k=1}^{\infty}\Phi_k \lambda^k\right )\lambda^{\frac 12n\sigma_3}e^{-\frac {s i}{2\lambda}\sigma_3}\quad \mbox{as}\quad\lambda\rightarrow 0 ,
  \end{equation}
where
  \begin{equation}\label{Phi-Painleve III-zero-coeff}
  \Phi(0)= \left(\frac {n i}s\right)^{(\frac{n}{2}+k) \sigma_3}\left(
                                                                                                                 \begin{array}{cc}
                                                                                                                  1 & c_{k,n}(t) \\
                                                                                                                 - q_{k,n}(t) &1-c_{k,n}(t)q_{k,n}(t)\\
                                                                                                                 \end{array}
                                                                                                               \right)
                                                                                                               \pi_k(0)^{\sigma_3}
   \left(\frac{n i}{s} \right)^{-\frac 12n\sigma_3},
   \end{equation}
   with
      \begin{equation*}
     c_{k,n}(t)=\frac {\pi_k(0)}{2\pi i} \int_{\Gamma}\frac {\pi_k(z)w(z; t)}{z}dz~~\mbox{and}~~q_{k,n}(t)=2\pi i \gamma^2_{k-1,n}(t)\frac {\pi_{k-1}(0)}{\pi_k{(0)}}. \end{equation*}
\end{itemize}

Now, from the above RH problem, we derive the following Lax pair for the function $\Phi(\l, s)$, which is exactly the same as the Lax pair for Painlev\'{e} III;  see \cite[pp.195-203]{fikn}.

\begin{prop} \label{prop-phi-piii}
  For the matrix function $\Phi(\l, s)$ given in \eqref{Psi function-painleve III}, we have
  \begin{equation}\label{Lax-pair-Phi}
    \Phi_{\l}= A(\l,s) \Phi \quad \textrm{and} \quad  \Phi_s = B(\l,s) \Phi,
  \end{equation}
  where
  \begin{equation}\label{Phi-Painleve III-Lax pair-in z}
        A(\lambda,s)=\frac {is}{2}\sigma_3+\frac {A_{-1}}{\lambda}+\frac {A_{-2}}{\lambda^2} \quad \textrm{and} \quad
    B(\lambda,s)=\frac {i\lambda}{2}\sigma_3+B_0+\frac {B_{-1}}{\lambda}.
 \end{equation}
 Here, the coefficients in the above formula are given below
\begin{equation}\label{Phi-Painleve III-Lax pair-in z-coeff-A-1}
A_{-1}=\left(
                                                           \begin{array}{cc}
                                                           \frac n2 + k & -\frac {n}{2\pi i\gamma_{k,n}^2} \left(\frac {ni}s\right)^{n+2k} \\
                                                           2 n \pi i \gamma_{k-1,n}^2 \left(\frac {ni}s\right)^{-n-2k} & -\frac n2-k\\
                                                           \end{array}
                                                         \right)
\end{equation}
\begin{equation}\label{Phi-Painleve III-Lax pair-in z-coeff-A-2}
A_{-2}=\frac {is}{2}\left(
                                                               \begin{array}{cc}
                                                                 1-2c_{k,n}q_{k,n} & -2c_{k,n} \left(\frac {ni} s\right)^{n+2k}\\
                                                                 -2q_{k,n}(1-c_{k,n}q_{k,n})\left(\frac {ni} s \right)^{-n-2k} & 2c_{k,n}q_{k,n}-1\\
                                                               \end{array}
                                                             \right)
\end{equation}
and
\begin{equation}\label{Phi-Painleve III-Lax pair-in z-coeff-B}
B_{0}=\frac {1}{s}\biggl(A_{-1}-\left ( \frac  n 2+k\right )\sigma_3\biggr), \quad B_{-1}=-\frac {A_{-2}}{s}.
\end{equation}

\end{prop}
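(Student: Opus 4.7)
The plan is to exploit the fact that $\Phi(\lambda,s)$ has jump matrices that are piecewise constant \emph{both} in $\lambda$ and in $s$. This is immediate from the jump condition \eqref{Phi-Painleve iii-jump}, since the constants $c_j$ do not depend on either variable. Consequently, the logarithmic derivatives
\begin{equation*}
A(\lambda,s):=\Phi_\lambda(\lambda,s)\Phi(\lambda,s)^{-1},\qquad B(\lambda,s):=\Phi_s(\lambda,s)\Phi(\lambda,s)^{-1}
\end{equation*}
have no jumps across $\Gamma^*$ and extend to single-valued meromorphic functions of $\lambda\in\mathbb{C}$, whose only possible singularities are at $\lambda=0$ and $\lambda=\infty$, i.e.\ the two irregular singular points of $\Phi$.

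Next I would read off the global analytic structure of $A$ and $B$ from the behaviour of $\Phi$ at these points. At $\lambda=\infty$, writing $\Phi=P(\lambda)\lambda^{(\frac n2+k)\sigma_3}e^{\frac{is\lambda}{2}\sigma_3}$ with $P(\lambda)=I+O(1/\lambda)$, the factor $e^{\frac{is\lambda}{2}\sigma_3}$ contributes $\frac{is}{2}\sigma_3$ to $\Phi_\lambda\Phi^{-1}$ and $\frac{i\lambda}{2}\sigma_3$ to $\Phi_s\Phi^{-1}$, while the other factors contribute at most $O(1/\lambda)$. At $\lambda=0$, writing $\Phi=\Phi(0)Q(\lambda)\lambda^{\frac n2\sigma_3}e^{-\frac{si}{2\lambda}\sigma_3}$ with $Q(\lambda)=I+O(\lambda)$, the essential-singular factor $e^{-\frac{si}{2\lambda}\sigma_3}$ produces a pole of order $2$ in $A$ and a simple pole in $B$; the remaining factors $\lambda^{\frac n2\sigma_3}$ contribute no singularity because the exponent is an integer times $\sigma_3$. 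A standard Liouville argument then forces
\begin{equation*}
A(\lambda,s)=\tfrac{is}{2}\sigma_3+\frac{A_{-1}}{\lambda}+\frac{A_{-2}}{\lambda^2},\qquad B(\lambda,s)=\tfrac{i\lambda}{2}\sigma_3+B_0+\frac{B_{-1}}{\lambda},
\end{equation*}
which establishes the shape of the Lax pair \eqref{Phi-Painleve III-Lax pair-in z}.

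To pin down the coefficients I would match asymptotic expansions. Expanding $A=[P'+\frac{n/2+k}{\lambda}P\sigma_3+\frac{is}{2}P\sigma_3]P^{-1}$ with $P=I+\Phi_{-1}/\lambda+O(\lambda^{-2})$ and collecting the $\lambda^{-1}$ coefficient gives $A_{-1}=(\tfrac n2+k)\sigma_3+\tfrac{is}{2}[\Phi_{-1},\sigma_3]$, and inserting the explicit form of $\Phi_{-1}$ from \eqref{Phi-Painleve III-infty-coeff} (noting that the commutator with $\sigma_3$ extracts the off-diagonal part and doubles it with a sign) reproduces \eqref{Phi-Painleve III-Lax pair-in z-coeff-A-1}. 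Similarly, expanding $A$ at $\lambda=0$ and evaluating the coefficient of $\lambda^{-2}$ yields $A_{-2}=\tfrac{is}{2}\Phi(0)\sigma_3\Phi(0)^{-1}$; substituting the formula \eqref{Phi-Painleve III-zero-coeff} for $\Phi(0)$, using $\det M=1$ for the middle factor $M$, and observing that the diagonal factors act on $M\sigma_3M^{-1}$ only through the conjugation $(ni/s)^{(n/2+k)\sigma_3}\cdot(ni/s)^{-(n/2+k)\sigma_3}$ (which multiplies off-diagonal entries by $(ni/s)^{\pm(n+2k)}$), reproduces \eqref{Phi-Painleve III-Lax pair-in z-coeff-A-2}. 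For $B$, an identical calculation at $\infty$ gives $B_0=\tfrac{i}{2}[\Phi_{-1},\sigma_3]=\tfrac{1}{s}(A_{-1}-(\tfrac n2+k)\sigma_3)$, and at $0$ gives $B_{-1}=-\tfrac{i}{2}\Phi(0)\sigma_3\Phi(0)^{-1}=-A_{-2}/s$, completing \eqref{Phi-Painleve III-Lax pair-in z-coeff-B}.

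The conceptual content is entirely in the first two paragraphs (constant jumps forcing rationality in $\lambda$, together with the prescribed singular structure at $0$ and $\infty$); the computations in the last paragraph are a bookkeeping exercise. The main thing to be careful about is the conjugation by the diagonal factors in \eqref{Phi-Painleve III-infty-coeff} and \eqref{Phi-Painleve III-zero-coeff} and keeping track of the non-commutativity between the polynomial part $P$ (or $Q$) and $\sigma_3$; once one notices that the conjugation $D\,(\cdot)\,D^{-1}$ by a diagonal $D$ commutes with $\sigma_3$, both $A_{-1}$ and $A_{-2}$ fall out directly. No compatibility check (zero-curvature identity) is required for the proposition itself; that would come next, to extract the Painlev\'e III equation.
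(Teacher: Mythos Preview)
Your proposal is correct and follows essentially the same route as the paper: constant jumps force $\Phi_\lambda\Phi^{-1}$ and $\Phi_s\Phi^{-1}$ to be rational in $\lambda$ with poles only at $0$ and $\infty$, and the expansions \eqref{Phi-Painleve III-infty}--\eqref{Phi-Painleve III-zero-coeff} then pin down the coefficients exactly as you write (the paper records the same intermediate formulas $A_{-1}=(\tfrac n2+k)\sigma_3+\tfrac{is}{2}[\Phi_{-1},\sigma_3]$, $A_{-2}=\tfrac{is}{2}\Phi(0)\sigma_3\Phi(0)^{-1}$, etc.). One small slip: the factor $\lambda^{\frac n2\sigma_3}$ \emph{does} contribute a simple pole $\tfrac{n}{2\lambda}\,\Phi(0)\sigma_3\Phi(0)^{-1}$ to $A$ (its exponent need not be an integer, and in any case the logarithmic derivative is $\tfrac{n}{2\lambda}\sigma_3$), but this is harmless since it is absorbed into $A_{-1}/\lambda$ and does not raise the pole order beyond~$2$.
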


\begin{proof}
  Note that the jump matrices in \eqref{Phi-Painleve iii-jump} are independent of $\lambda$ and $s$. This implies that both
\begin{equation}\label{Lax-pair-painleve III}
 A(\lambda,s)=\Phi_{\lambda}\Phi^{-1}
 \quad \textrm{and} \quad B(\lambda,s)=\Phi_{s}\Phi^{-1}
\end{equation}
are analytic  functions of $\lambda$  with only possible isolated singularities at the origin and at infinity.   Using the asymptotic expansions in \eqref{Phi-Painleve III-infty}-\eqref{Phi-Painleve III-zero-coeff}, we find that
\begin{equation}
  A_{-1}=\left (\frac n2+k\right )\sigma_3+\frac {is}2[\Phi_{-1},\sigma_3], \qquad A_{-2}=\frac {is}{2}\Phi(0)\sigma_3\Phi(0)^{-1}
\end{equation}
and
\begin{equation}
  B_{0}=\frac i 2[\Phi_{-1},\sigma_3], \qquad  B_{-1}=-\frac {A_{-2}}{s},
\end{equation}where $[X, Y]=XY-YX$ is the commutator.
Then direct computations give us the results.
\end{proof}

It is known  in several circumstances that  the Hankel determinants admit an interpretation as the Jimbo-Miwa-Ueno isomonodromic $\tau$-function for the rank 2 linear system  of differential equations; see \cite{fik} for
the Hankel determinants associated with
the  exponential weight and  \cite{beh,beh2} for more general semi-classical weights. Now we have established  the relation  between the perturbed Laguerre orthogonal polynomials and the Lax pair for the Painlev\'{e} III equation. Naturally, the associated Hankel determinant  is also expected to relate  to the $\tau$-function of the Painlev\'{e} III equation. Thus  we are in a position   to prove our first result for   fixed degree $k$.

\bigskip

\noindent\emph{Proof of Theorem \ref{Theorem: Hankel as tau function}.} According to Proposition \ref{prop-phi-piii}, $\Phi(\l, s)$ satisfies the same Lax pair as Painlev\'{e} III. Then, applying an argument in \cite[(5.3.4),(5.3.7)]{fikn}, we see that the function
\begin{equation}\label{painleve iii-quantity}
u(s)=-i(A_{-1})_{12}/{(A_{-2})_{12}} =-\frac n{2\pi i s c_{k,n}\gamma_{k,n}^2}
\end{equation}
 solves  the Painlev\'{e} III equation
\begin{equation}\label{painleve III-y}
u''(s)=\frac {(u')^2}{u}-\frac {u'}{s}+\frac {4}{s}(\Theta_0 u^2+1-\Theta_{\infty})+4u^3-\frac {4}{u},
\end{equation}
with the parameters $\Theta_0=n$ and $\Theta_{\infty}=-(2k+n)$. By \eqref{a-n in terms of  Y}, we have
\begin{equation}\label{u-and-a}
  u(s)=-\frac {n t}{s a_{k,n}}=\frac {\sqrt{-t}}{i a_{k,n}}.
\end{equation}
Substituting \eqref{u-and-a} into  \eqref{painleve III-y}  gives  us \eqref{introduction: painleve III}.

Next, we consider the Hankel determinant $D_k[w;t]$. Denote by $\Phi_{\infty}(\lambda)$ and $\Phi_{0}(\lambda)$ the series in the expansions \eqref{Phi-Painleve III-infty} and \eqref{Phi-Painleve III-zero}, namely,
\begin{equation*}
\Phi_{\infty}(\lambda)=I+\sum_{k=1}^{\infty}\frac {\Phi_{-k}}{\lambda^k} \quad \textrm{and} \quad \Phi_{0}(\lambda)=I+\sum_{k=1}^{\infty}\Phi_{k}\lambda^k,
\end{equation*}
with $\Phi_{-1}$ qiven in \eqref{Phi-Painleve III-infty-coeff} and
\begin{equation}\label{phi-painleve III zero coeff}
\Phi_1=\frac {1}{is}\left(-n^2\beta_{k,n}(t)-(k^2+nk)+\frac {s^2}{2}(1-2c_{k,n}(t)q_{k,n}(t))\right)\sigma_3+\left(
         \begin{array}{cc}
          0  & * \\
           * & 0 \\
         \end{array}
       \right);
\end{equation} cf. \eqref{Y-solution} and \eqref{Psi function-painleve III},
where $*$ denotes the off-diagonal  entries independent of $\lambda$. By the general theory of Jimbo-Miwa-Ueno \cite{jmu}, the isomonodromy $\tau$-function for the  Lax pair  in \eqref{Lax-pair-Phi}-\eqref{Phi-Painleve III-Lax pair-in z-coeff-B} is defined by  the formula
\begin{equation}\label{painleve III-tao-residue formula}
d\log\tau(s):=-\Res_{\lambda=0}\mathrm{Tr}\left\{\Phi_{0}^{-1}(\lambda)\frac {\partial \Phi_0(\lambda)}{\partial \lambda}dT_0(\lambda)\right \}-\Res_{\lambda=\infty}\mathrm{Tr}\left\{\Phi_{\infty}^{-1}(\lambda)\frac {\partial \Phi_{\infty}(\lambda)}{\partial \lambda}dT_{\infty}(\lambda)\right\},
\end{equation}
where
\begin{equation*}
dT_0(\lambda)=-\frac {i}{2\lambda}\sigma_3ds, \quad dT_{\infty}(\lambda)=\frac {i\lambda}{2}\sigma_3ds;
\end{equation*}
see \cite[Eq.(1.23)]{jmu}. Substituting the definition of $\Phi_{\infty}(\lambda)$ and $\Phi_{0}(\lambda)$ into \eqref{painleve III-tao-residue formula}, we obtain
\begin{equation}\label{painleve III-tao-residue formula-1}
\frac {d\log\tau(s)}{ds}=\frac i2 \mathrm{Tr}(\Phi_{1}\sigma_3-\Phi_{-1}\sigma_3) =\frac 1s\left (-2n^2 \beta_{k,n}+2s^2c_{k,n}q_{k,n}-s^2+(k^2+nk)\right ).
\end{equation}
%By \cite[p196 (5.3.4)]{fikn}, we have the first integral of  the system \eqref{Lax-pair-painleve III}
%\begin{equation}\label{painleve III-first integral}
%(k-(n+2k)c_{k,n}q_{k,n})a_{k,n}-nt c_{k,n}q_{k,n}(c_{k,n}q_{k,n}-1)=\frac {n}{t}\beta_{k,n}a_{k,n}^2.
%\end{equation}
Now a combination of \eqref{differential identity}, \eqref{H and a-n}, \eqref{p and alpha-n} and \eqref{beta integrate by part} gives
\begin{equation}\label{painleve III-first integral}
n^2\beta_{k,n}=n^2t c_{k,n}q_{k,n}-H_{k,n}+k(k+n);
\end{equation}see \eqref{Phi-Painleve III-Lax pair-in z-coeff-A-2} for the definition of $c_{k,n}(t)$ and $q_{k,n}(t)$.
Thus, we obtain  from \eqref{painleve III-tao-residue formula-1} and \eqref{painleve III-first integral} that
\begin{equation}\label{painleve III-tao-residue formula-2}
\frac {d\log\tau(s)}{dt} =\frac 1{2t}\left (2H_{k, n}-s^2-(k^2+nk)\right ).
\end{equation}
Here use has been made of  the relation $s^2=n^2t$.   In view  of  the formula \eqref{logrithmic derivative of hankel}, and integrating both sides of \eqref{painleve III-tao-residue formula-2}, we arrive at the following relation between the Hankel determinant $D_k[w;t]$ and the $\tau$-function of the Painlev\'{e} III equation:
\begin{equation*}
D_k[w;t]=\mathrm{const} \cdot \tau(s) e^{  {n^2t}/{2}}t^{ {k(k+n)}/{2}},
\end{equation*}which is \eqref{introduction:painleve III-tao function}.
This completes the proof of Theorem \ref{Theorem: Hankel as tau function}. \hfill \qed

%%%%%%%%%%%%%%%%%%%%%%%%%%%%%%%%%%%%%%%%%%%%%%%%%%%%%%%%%%%%%%%%%%%%%%%%%%%%%%%%%%%%%%%%%%%

\section{Equilibrium measures} \label{sec-E-measure}

The equilibrium measure with the external field $V_t(x)$ in \eqref{potential} is given recently in Texier and Majumdar \cite{texier}. To obtain  a double scaling limit at the critical time, we need a modified equilibrium problem, which will involve  a \emph{signed} measure.  This signed measure will be used to construct the important $g$-function and $\phi$-function in the Riemann-Hilbert analysis. The idea of considering a modified equilibrium problem has been successfully applied to study similar double scaling limits in several different problems, such as   varying quartic potentials by Claeys and Kuijlaars \cite{claeys:Kuijlaars} and Duits and Kuijlaars \cite{Dui:Kuij}, and a cubic potential by Bleher and Dea\~{n}o \cite{Ble:Dea}.

In this section, we will go back to the weight  \eqref{pL weight},   consider a regular equilibrium problem first, and see how the critical time occurs.  Then, to facilitate our future Riemann-Hilbert analysis near the critical time, we will consider a modified equilibrium problem by fixing the left endpoint. This will give us the signed measure we need.

\subsection{Equilibrium measure  and a critical case} \label{sec-E-measure-regular}

Consider the extremal problem minimizing the energy with the external field $V_t(x)$ in \eqref{potential}:
\begin{equation}\label{energy}
I( \nu )=\int_0^{\infty}V_t(x)d\nu(x)+\int_0^{\infty}\int_0^{\infty}\log\frac1{|x-y|}d\nu(x)d\nu(y).
\end{equation}
According to the general  potential theory \cite{Saff:Totik}, there exists a unique minimizer $d \nu_t$ of $I(\nu)$ {  among all Borel probability measures $d\nu$ on $[0, +\infty)$}, such a   probability measure   is called the equilibrium measure. For the potential $V_t(x)=x-\log x+t/x$ in \eqref{potential}, the equilibrium measure $d \nu_t$ can be computed explicitly.

{
The equilibrium  measures   for $t>t_{cr}$ and   $t\to t_{cr}$  have been  computed explicitly in Texier and Majumdar \cite{texier}. To make the present  paper self-contained, we sketch the proof below, which  differs from that in \cite{texier}. Inspired by \cite{texier}, and in view of the measure for the positive-$t$ case,   we    assume  that the support of $d\nu_t(x)$ has only one piece.   Also,  for fixed $t$, the behavior of the  density
$v_t(x)$ is expected to demonstrate a weak singularity at the endpoints since the contour is deformed to keep away from the possible singularity at the origin. We derive the equilibrium measure by solving a scalar RH problem,   based on the Euler-Lagrange equation \eqref{Lagrange variation equation}.}
\begin{prop}
  Let $v_t(x)$ be the density function of the equilibrium measure $d\nu_t$ supported on an  interval $(a, b)\subset [0,  \infty)$, such that $d\nu_t(x)=v_t(x) dx$. Then {  for $t>t_{cr}$} we have
  \begin{equation} \label{vt-formula}
    v_t(x)=\frac {x+c}{2\pi x^2}\sqrt{(x-a)(b-x)}, \qquad x\in (a,b),
  \end{equation}
  where $c=t/\sqrt{ab}$ and  $a,b$ are determined by \eqref{a,b equation}.
  Moreover, when $t= t_{cr}$ as   in \eqref{t-critical},
    we have
  \begin{equation}\label{psi-critical}
   v_{cr}(x)=\frac {1}{2\pi x^2}\sqrt{(x-a_{cr})^3(b_{cr}-x)}, \qquad x\in (a_{cr},b_{cr}),
  \end{equation}
  where the critical endpoints $a_{cr}$ and  $b_{cr}$ are given in \eqref{a,b, mu critical}.
\end{prop}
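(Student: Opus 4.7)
The plan is to solve the equilibrium problem via a scalar Riemann--Hilbert problem for the resolvent $G(z) := \int_a^b (z-y)^{-1} v_t(y)\,dy$, exploiting the fact that the external field $V_t$ has only polar singularities at the origin and grows linearly at infinity.

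Under the stated one-cut hypothesis $\mathrm{supp}(\nu_t)=[a,b]\subset(0,\infty)$, the Euler--Lagrange equation $V_t(x) - 2\int\log|x-y|\,d\nu_t(y) = \ell$ on the support differentiates to $V_t'(x) = 2\,\mathrm{P.V.}\!\int_a^b v_t(y)/(x-y)\,dy$ for $x\in(a,b)$, which translates into the scalar RH problem: $G$ is analytic on $\mathbb{C}\setminus[a,b]$, analytic also at $z=0$ since $0\notin[a,b]$, satisfies the additive jump $G_+(x)+G_-(x) = V_t'(x) = 1 - 1/x - t/x^2$ on $(a,b)$, and obeys $G(z) = 1/z + O(z^{-2})$ as $z\to\infty$. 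The density is then recovered from $v_t(x) = \frac{1}{2\pi i}(G_-(x) - G_+(x))$.

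The next step is the standard ansatz $G(z) = \tfrac{1}{2}V_t'(z) - \tfrac{1}{2} Q(z) R(z)$, where $R(z) := \sqrt{(z-a)(z-b)}$ is cut on $[a,b]$ and normalised so that $R(z)\sim z$ at infinity. Because $R_++R_-=0$ on $(a,b)$, this automatically enforces the jump as long as $Q$ is analytic across $(a,b)$; the remaining requirements determine $Q$. Analyticity of $G$ at the origin, using $R(0)=-\sqrt{ab}$ and $R'(0)=(a+b)/(2\sqrt{ab})$, forces $Q$ to have a pole of order at most two at $z=0$; matching Laurent coefficients against $\tfrac{1}{2}V_t'$ pins down $Q(z) = (z+c)/z^{2}$ with $c = t/\sqrt{ab}$. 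Expanding $G$ at infinity using $R(z) = z - (a+b)/2 + O(1/z)$ and imposing $G(z) = 1/z + O(z^{-2})$ then produces exactly the two endpoint conditions in \eqref{a,b equation}.

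Substituting back and using $R_\pm(x) = \pm i\sqrt{(x-a)(b-x)}$ on $(a,b)$ yields
\begin{equation*}
v_t(x) = \frac{Q(x)}{2\pi}\sqrt{(x-a)(b-x)} = \frac{x+c}{2\pi x^2}\sqrt{(x-a)(b-x)},
\end{equation*}
which is \eqref{vt-formula}. This is a non-negative probability measure on $[a,b]$ precisely when $a+c\geq 0$, and the borderline case $a+c=0$ defines the critical time: adjoining $a+c=0$ to the system \eqref{a,b equation} gives a closed algebraic system whose unique positive solution is \eqref{a,b, mu critical} and \eqref{t-critical}. At $t=t_{cr}$ the factor $(x+c)$ becomes $(x-a_{cr})$ and combines with $\sqrt{x-a_{cr}}$ to promote the left endpoint to a higher-order zero, yielding \eqref{psi-critical}. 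The only non-computational point is justifying that the RH output is the minimiser rather than merely a candidate; for this one invokes the standard uniqueness of the minimiser of $I(\nu)$ together with non-negativity of $v_t$ on $(a,b)$ throughout $t\geq t_{cr}$, the latter following by continuity of $a+c$ in $t$ and the fact that it vanishes precisely at $t=t_{cr}$. This is also the main obstacle in the argument: the cleanness of the explicit ansatz rests on the one-cut/single-interval assumption, and the potential-theoretic justification that no gap opens and no endpoint escapes to $0$ or $\infty$ for $t>t_{cr}$ must be invoked to close the argument.
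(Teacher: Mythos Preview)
Your proposal is correct and follows essentially the same route as the paper: both set up and solve the scalar Riemann--Hilbert problem for the resolvent (the paper uses the normalization $G(z)=\tfrac{1}{\pi i}\int_a^b v_t(y)/(y-z)\,dy$), fixing the rational prefactor via analyticity at $z=0$ together with the $1/z$ decay at infinity. One small imprecision worth noting: the leading coefficient $1$ of $z$ in your numerator $z+c$ is actually forced by the behavior of $G$ at infinity (the paper writes $\tilde c\,z+c$ and deduces $\tilde c=1$ from the large-$z$ expansion), not by the Laurent matching at the origin, though this does not affect the final system or the result.
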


\begin{proof}
 From \eqref{energy}, it is known that the equilibrium measure $d\nu_t$ satisfies the  Euler-Lagrange equation
\begin{equation}\label{Lagrange variation equation}
V_t(x)+2\int_a^{b}\log\frac1{|x-y|}d\nu_t(y)=l, \qquad x\in(a,b),
\end{equation}
where $l$ is the Lagrange multiplier.
Differentiating with respect to $x$, we get
\begin{equation}\label{singular integral}
V'_t(x)-2 \, \textrm{p.v.}\int_a^{b}\frac{v_t(y)}{x-y}dy=0,\quad  x\in(a,b).
\end{equation}
where the integral is taken as the Cauchy principle value. This is an integral equation for the density function $v_t(x)$, which can be solved explicitly. Indeed, one can define
\begin{equation}\label{G function}
G(z)=\frac 1{\pi i}\int_a^b \frac {v_t(y)}{y-z}dy, \quad z \in \mathbb{C} \setminus [a,b].
\end{equation}
Then it follows from the Plemelj formula that
\begin{equation}\label{G function boundary value}
G_{\pm}(x)=\frac 1{\pi i}\textrm{p.v.} \int_a^b \frac {v_t(y)}{y-x}dy \pm v_t(x), \quad x\in(a,b),
\end{equation}
where the integral is  the Cauchy principle value. It is readily verified that   $G(z)$ satisfies  the following  scalar Riemann-Hilbert problem:
\begin{itemize}
  \item[(i)] $G(z)$ is analytic for $z \in \mathbb{C}\setminus[a,b]${ , having  at most weak singularities at $z=a,\;b$};

  \item[(ii)] $ G_{+}(x)+G_-(x)=-\frac{1}{\pi i}V'_t(x)$ for  $x\in(a,b) $;

  \item[(iii)] $ G(z)\sim-\frac 1{\pi iz}$ as $z\to \infty$.

\end{itemize}
Solving this RH problem yields
\begin{equation}\label{G function-formula}
G(z)=-\frac {V_t'(z)}{2\pi i}+ \frac{\tilde c z+c}{2\pi i z^2} \sqrt{(z-a)(z-b)},
\end{equation}where $V_t'(z)=1-\frac 1 z-\frac t{z^2}$, $c=\frac {t}{\sqrt{ab}}$  and $\tilde c=\frac 1 {\sqrt{ab}} \left [ 1+\frac t 2\left (\frac 1 a+\frac 1 b\right )\right ]$. Here attention should be paid to the fact that $G(z)$ is analytic outside the interval $[a, b]$, especially at $z=0$.
Now expanding \eqref{G function-formula} in powers of $1/z$,
the large-$z$ behavior of $G(z)$ ensures that
\begin{equation*}
  \tilde c=1~~\mbox{and}~~ c-\frac {a+b} 2=-3,
\end{equation*}which are indeed
\eqref{a,b equation}. Furthermore, a combination of \eqref{G function boundary value} and \eqref{G function-formula} yields \eqref{vt-formula}.

Moreover, in the critical case when
\begin{equation*}
c_{cr}=-a_{cr}
\end{equation*}
a straightforward computation gives us \eqref{t-critical}-\eqref{a,b, mu critical}.
\end{proof}

\begin{rmk}
  Of course, the formulas for the density function and endpoints in \eqref{vt-formula} and \eqref{a,b equation}  hold when $t\geq0$. For any $t\geq 0$, the density function is supported on $[a,b]$ with $0<a<b$ and vanishes like square roots at both endpoints. As a consequence, one will obtain usual Airy-type and sine-type asymptotic expansions for the orthogonal polynomials near the endpoints and inside the support, respectively.
\end{rmk}

%Numerical, both $a_{cr}$ and $b_{cr}$ are positive
%\begin{equation}
%  a_{cr} = \frac{1}{2} (3-2^{1/3} - 2^{2/3}) \approx 0.076, \quad b_{cr} = \frac{3}{2} (1 + 2^{1/3} + 2^{2/3})\approx 5.771.
%\end{equation}

\subsection{Signed   equilibrium   measure}

The critical case when $t=t_{cr}$ is termed a \emph{freezing transition}; see \cite{texier}.
One can see that, when $t=t_{cr}$, the density function $v_t(x)$ in \eqref{psi-critical} vanishes like a $3/2$ root at $a_{cr}$. This suggests that the local behavior of the orthogonal polynomials near $a_{cr}$ is described in terms of the Painlev\'e I transcendents; see \cite{Ble:Dea,Dui:Kuij}. To precisely construct a local parametrix near the endpoint $a_{cr}$ by using the Painlev\'e I transcendents, a delicate study near $a_{cr}$ is needed in our subsequent nonlinear steepest descent analysis for the RH problem. Therefore, technically it is more convenient to have a measure whose left endpoint of the support is exactly located at $a_{cr}$.  Note that in the case when only positive measures are involved as in Section \ref{sec-E-measure-regular}, both endpoints $a$ and $b$ in \eqref{a,b equation} vary when the value of parameter $t$ changes.   So we need to minimize the same energy functional  \eqref{energy}
{  among  \emph{signed} measures   on $[a_{cr}, + \infty)$  which are nonnegative
except possibly on $[a_{cr}, a_{cr} + \delta_1 ]$ for some sufficiently small $ \delta_1> 0$.
}  As there is no symmetry as in \cite{Dui:Kuij}, the right endpoint $b$ may depend  on $t$. A similar treatment is also employed in \cite{Ble:Dea}.

By a similar argument performed in Section \ref{sec-E-measure-regular}, we find the new minimizer explicitly.

\begin{prop}
  Let $\psi_{t}(x)$ be the signed  density function of the minimizer
  of the minimizer of the energy functional in \eqref{energy} {  on $[a_{cr}, + \infty)$}.
   Then we have
  \begin{equation}\label{E-measure-modified}
    \psi_{t}(x)=\frac {1}{2\pi x^2}\sqrt{\frac{b-x}{x-a_{cr}}} \left(x^2+d_1x+d_0\right),~~x\in [a_{cr}, b],
  \end{equation}
  where
  \begin{equation}
    d_0=-t\sqrt{\frac{a_{cr}}{b}} \quad \textrm{and} \quad d_1=-\sqrt{\frac{a_{cr}}{b}}\left(1-\frac{t}{2a_{cr}}+\frac{t}{2b}\right)
  \end{equation}
  and  $b$ is determined by the equation
  \begin{equation}
        \sqrt{\frac{a_{cr}}{b}}\left(1-\frac{t}{2a_{cr}}+\frac{t}{2b}\right)+\frac{b-a_{cr}}{2}=3.
  \end{equation}
\end{prop}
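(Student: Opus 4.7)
The plan is to mirror the scalar Riemann--Hilbert argument used for the usual equilibrium measure, modified to accommodate the fact that the left endpoint is pinned at $a_{cr}$ while the measure is permitted to change sign in a small neighborhood to its right. I would first write down the Euler--Lagrange condition associated with the constrained variational problem. Since the support still contains an interval $(a_{cr},b)$ and the one-sided nonnegativity constraint is inactive near $a_{cr}$, a standard variational argument yields
\begin{equation}
V_t(x)+2\int_{a_{cr}}^b\log\frac{1}{|x-y|}\,\psi_t(y)\,dy=\ell,\qquad x\in(a_{cr},b),
\end{equation}
for some Lagrange multiplier $\ell$. Differentiating in $x$ converts this to the singular integral equation
\begin{equation}
V_t'(x)=2\,\textrm{p.v.}\!\int_{a_{cr}}^b\frac{\psi_t(y)}{x-y}\,dy,\qquad x\in(a_{cr},b),
\end{equation}
which has the same form as \eqref{singular integral} but carries different boundary data at the endpoints.

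Next, I would introduce the Cauchy transform $G(z)=\frac{1}{\pi i}\int_{a_{cr}}^b\frac{\psi_t(y)}{y-z}dy$ and read off a scalar Riemann--Hilbert problem for $G$: analyticity on $\mathbb{C}\setminus[a_{cr},b]$, analyticity at $z=0$, the jump $G_++G_-=-V_t'/(\pi i)$ on $(a_{cr},b)$, and the normalization $G(z)\sim -1/(\pi i z)$ at infinity. The key structural change relative to the previous proposition is the endpoint behavior: because $a_{cr}$ is held fixed while $\psi_t$ is signed, $G$ is allowed an integrable singularity $\sim (z-a_{cr})^{-1/2}$ at the left endpoint, and continues to vanish like $(z-b)^{1/2}$ at the free right endpoint. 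The matching ansatz is therefore
\begin{equation}
G(z)=-\frac{V_t'(z)}{2\pi i}+\frac{P(z)}{2\pi i\,z^2}\sqrt{\frac{z-b}{z-a_{cr}}},
\end{equation}
where $P(z)$ is a polynomial and the square root is cut along $[a_{cr},b]$ with branch chosen so as to tend to $1$ at infinity.

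What remains is algebraic: force $G$ to be analytic at the origin and to exhibit the prescribed behavior at infinity. The potential $V_t'(z)=1-1/z-t/z^2$ contributes a $1/z$ and a $1/z^2$ singularity at the origin, so $P(z)$ must have degree exactly $2$; writing $P(z)=z^2+d_1z+d_0$, the expansion of $\sqrt{(z-b)/(z-a_{cr})}$ at $z=0$ (value $\sqrt{b/a_{cr}}$, logarithmic derivative $\tfrac{1}{2a_{cr}}-\tfrac{1}{2b}$) gives two linear conditions that pin down $d_0=-t\sqrt{a_{cr}/b}$ and $d_1=-\sqrt{a_{cr}/b}\bigl(1-t/(2a_{cr})+t/(2b)\bigr)$. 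Expanding at infinity, the condition on the $1/z$ coefficient (total mass one) produces the stated transcendental relation for $b$. Then $\psi_t(x)=\frac{1}{2i}\bigl(G_-(x)-G_+(x)\bigr)$ on $(a_{cr},b)$ recovers \eqref{E-measure-modified}. The main difficulty I foresee is not the computation but the variational justification: one must check that the formal extremal produced by this RH calculation is indeed the unique minimizer among signed measures subject to the one-sided nonnegativity constraint, and in particular that $\psi_t$ remains genuinely nonnegative outside the small interval adjacent to $a_{cr}$ for $t$ near $t_{cr}$, so that the constraint is not violated elsewhere. This should follow from the explicit formula \eqref{E-measure-modified} together with a continuity argument anchored at $t=t_{cr}$ using \eqref{psi-critical}, but it is the step that requires the most care.
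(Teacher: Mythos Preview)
Your proposal is correct and takes essentially the same approach as the paper, which in fact offers no detailed proof at all beyond the sentence ``By a similar argument performed in Section~\ref{sec-E-measure-regular}, we find the new minimizer explicitly.'' You have correctly identified the one structural modification needed---allowing $G$ a $(z-a_{cr})^{-1/2}$ singularity at the pinned endpoint while retaining $(z-b)^{1/2}$ vanishing at the free one---and your algebraic bookkeeping for $d_0$, $d_1$, and the equation for $b$ checks out (note that the $1/z$ contribution from $-V_t'/(2\pi i)$ must be included when matching the large-$z$ normalization, which accounts for the $3$ rather than $2$ on the right-hand side). One very minor slip: with the conventions in play, $\psi_t=(G_+-G_-)/2$ on $(a_{cr},b)$, not $\tfrac{1}{2i}(G_--G_+)$; the jump of the square root already supplies the factor of $i$.
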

\vskip .5cm

It is worth noting  that for $t=t_{cr}$, the density of the modified equilibrium measure $\psi_{t}(x)$ is reduced to $v_{cr}(x)$ in \eqref{psi-critical}. Moreover, near the critical time $t=t_{cr}$, we have
\begin{equation}\label{e-measure-modified coefficients}
 \begin{array}{ll}
           & b=b_{cr}+\sqrt{\frac{b_{cr}}{a_{cr}}}(b_{cr}-a_{cr})^{-1}(t-t_{cr})+O\left ((t-t_{cr})^2\right ), \\[.2cm]
           & d_0=a_{cr}^2-\sqrt{\frac{a_{cr}}{b_{cr}}} \frac{2b_{cr} - a_{cr}}{2(b_{cr} - a_{cr})}(t-t_{cr})+O\left ((t-t_{cr})^2\right ),\\[.2cm]
           & d_1=-2a_{cr}+\frac 12\sqrt{\frac{b_{cr}}{a_{cr}}}(b_{cr}-a_{cr})^{-1}(t-t_{cr})+O\left ((t-t_{cr})^2\right ).
         \end{array}
 \end{equation}

%\begin{equation}\label{psi and psi critical}
%\psi_{t}=\psi_{cr}+(t_{cr}-t)\psi_0
%\end{equation}
%where $\psi_{cr}$ is defined in \eqref{psi-critical}. Then combining \eqref{Lagrange variation equation} and \eqref{singular integral} we get
%\begin{equation}\label{psi-0-condition}
%\int_a^b\psi_0(x)dx=0; \quad  2\int_a^{b}\frac{\psi_0(t)}{x-t}dt=\frac 1{x^2}.
%\end{equation}

%Similarly as \eqref{G function}, we introduce
%\begin{equation}\label{G-0 function}
%G_0(z)=\frac 1{\pi i}\int \frac {\psi_0(t)}{t-z}dt.
%\end{equation}
%Then a similar calculation as leading to $G$ \eqref{G function-formula}, we get from \eqref{psi-0-condition} and \eqref{G-0 function},

 %\begin{equation}\label{G-0 function -formula}
%G_0(z)=-\frac 1{2\pi iz^2}-\frac {\sqrt{ab}}{2\pi i z^2\sqrt{(z-a)(z-b)}}(1-\frac z{2a}-\frac z{2b}),
%\end{equation}
%and
% \begin{equation}\label{psi-0  formula}
%\psi_0(x)=\frac {\sqrt{ab}}{2\pi x^2\sqrt{(x-a)(b-x)}}(1-\frac x{2a}-\frac x{2b}),\quad  x\in(a,b).
%\end{equation}

Based on the signed measure obtained above, we  define several auxiliary functions which will  be used in our further analysis.
\begin{defn}The  $g$-function is defined as
   \begin{equation}\label{g function}
        g(z)=\int_{a_{cr}}^b\log(z-s)\psi_{t}(s)ds \quad  \mbox{for}\quad z\in \mathbb{C} \setminus (-\infty,b],
    \end{equation}
    where $\arg(z-s)\in(-\pi,\pi)$, and  the equilibrium  density  function $\psi_{t}(x)$ is given in \eqref{E-measure-modified}.
\end{defn}

\begin{defn}
  We also define the following $\phi$-functions
\begin{eqnarray}
\phi_{t}(z)&=&\frac 1{2}\int_{a_{cr}}^z\frac{(s^2+d_1s+d_0)(s-b)^{1/2}}{s^2 (s-a_{cr})^{1/2} }ds, \ z\in\mathbb{ C}\setminus\{(-\infty,0)\cup(a_{cr},\infty)\}, \label{phi function-sign measure} \\
\phi_{cr}(z)&=&\frac 1{2}\int_{a_{cr}}^z\frac{(s-a_{cr})^{3/2}(s-b_{cr})^{1/2}}{s^2}ds, \ z\in\mathbb{ C}\setminus\{(-\infty,0)\cup(a_{cr},\infty)\}, \label{phi function-critical}
\end{eqnarray}
where the branches are chosen such that $\arg(s-a_{cr})\in(0,2\pi)$, $\arg(s-b_{cr})\in(0,2\pi)$ and $\arg(s-b)\in(0,2\pi)$.
\end{defn}

From the above definitions, it is immediately seen that $g(z)$ satisfies the Euler-Lagrange equation
\begin{equation}\label{phase condition}
g_+(x)+g_-(x)-V_t(x)-l=0, \quad x\in(a_{cr},b),
\end{equation}
{  and the variational inequality
\begin{equation}\label{variational-inequality}
%g_+(x)+g_-(x)-V_t(x)-l<0,~x\in(-\infty, a_{cr},b);~~
2g(x) -V_t(x)-l<0,~x\in  (b, \infty),
\end{equation}where $l$ is the Lagrange multiplier introduced in \eqref{Lagrange variation equation}.
}
Moreover, the $g$-function and the $\phi$-function are related by
\begin{equation}\label{g and phi}
g_+(x)-g_-(x)=2\pi i-2\left (\phi_ t\right )_+(x), \quad x\in(a_{cr},b) .
\end{equation}

Note that $\phi_{t}(z)$ and $\phi_{cr}(z)$ are close to each other when $t$ approaches  $t_{cr}$. If we rewrite $\phi_{t}(z)$ as
\begin{equation}\label{phi-t-phi-critical}
\phi_{t}(z)=\phi_{cr}(z)+(t-t_{cr})\phi_0(z),
\end{equation}
then, in view of \eqref{phi function-sign measure}-\eqref{phi function-critical},  we have
\begin{equation}\label{phi function -zero}
\phi_0(z)=-i\frac{\sqrt{b_{cr}-a_{cr}}}{2a_{cr}\sqrt{a_{cr}b_{cr}}}\sqrt{z-a_{cr}}(1+r_0(z)), \quad  \arg(z-a_{cr})\in(0,2\pi),
\end{equation}
where $r_0(z)$ is analytic in a neighborhood of $z=a_{cr}$ and $r_0(a_{cr})=0$. We also need some local information of the functions $\phi_{cr}(z)$ and $\phi_{t}(z)$ at critical    points   $z=a_{cr}$ and $z=0$. From their definitions in \eqref{phi function-sign measure} and \eqref{phi function-critical}, we have
\begin{equation}\label{phi-critical function at a}
\phi_{cr}(z)\sim \frac{(b_{cr}-a_{cr})^{\frac 12}}{5a_{cr}^2}(z-a_{cr})^{\frac 52}e^{\frac 12\pi i}, \quad z\to a_{cr},
\end{equation}
where $\arg(z-a_{cr})\in(0,2\pi)$, and
\begin{equation}\label{phi function at 0}
\phi_{t}(z) = \frac {t}{2z} - \frac{1}{2} \log z + O(1), \quad z\to 0,
\end{equation}
where $\arg z\in(-\pi,\pi)$. From the above formula, one can see that $\re \phi_{t}(z)>0$
if $t<0$  and  $z$ approaches  the origin such that $\cos (\arg z)< \frac {|z|\log |z| } t$; see Figures \ref{contour-Phi-two} and \ref{contour-Phi-cr}.
{  In particular, we see that  $e^{-n\phi_t(z)}$ is exponentially small as $z\to 0$, $z\in \Gamma_2$ or $z\in \Gamma_3$; cf. Figure \ref{contour-ortho} for the contours.  In view of \eqref{phi-t-phi-critical}, one can see that the same holds for $e^{-n\phi_{cr}(z)}$. It is worth mentioning that on $\Gamma_2$ and $\Gamma_3$ with $|z-\delta|=\delta$, we have $\re \frac 1 z \equiv \frac 1 {2\delta}$. }

\begin{figure}[h]
 \begin{center}
   \includegraphics[width=15.5cm]{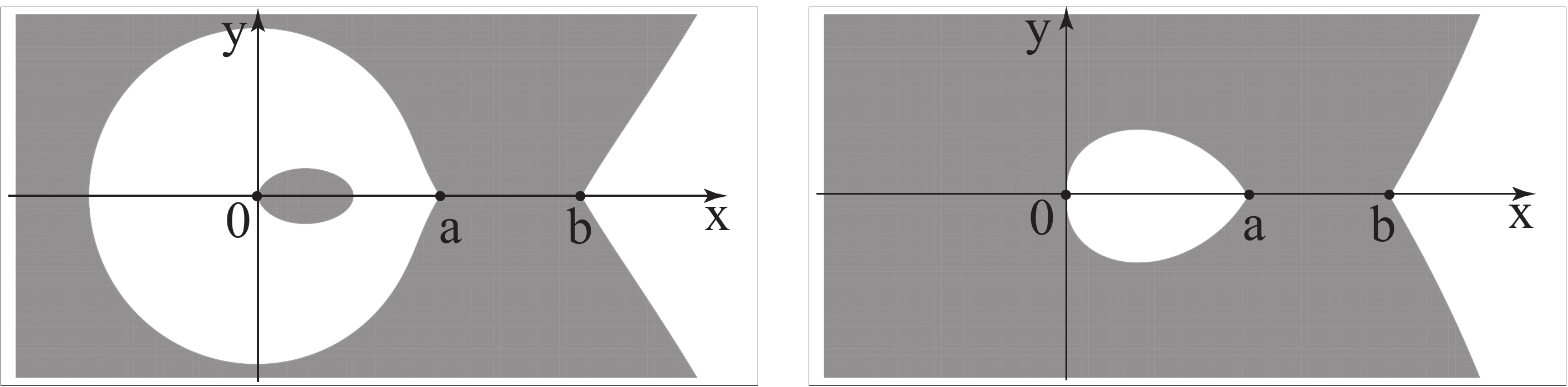} \end{center}
  \caption{The shaded region is the region where $\re\phi_t(z)<0$. The left and right pictures correspond to cases when $t<0$ and $t>0$, respectively.}
 \label{contour-Phi-two}
\end{figure}

\begin{figure}[h]
 \begin{center}
   \includegraphics[width=11cm]{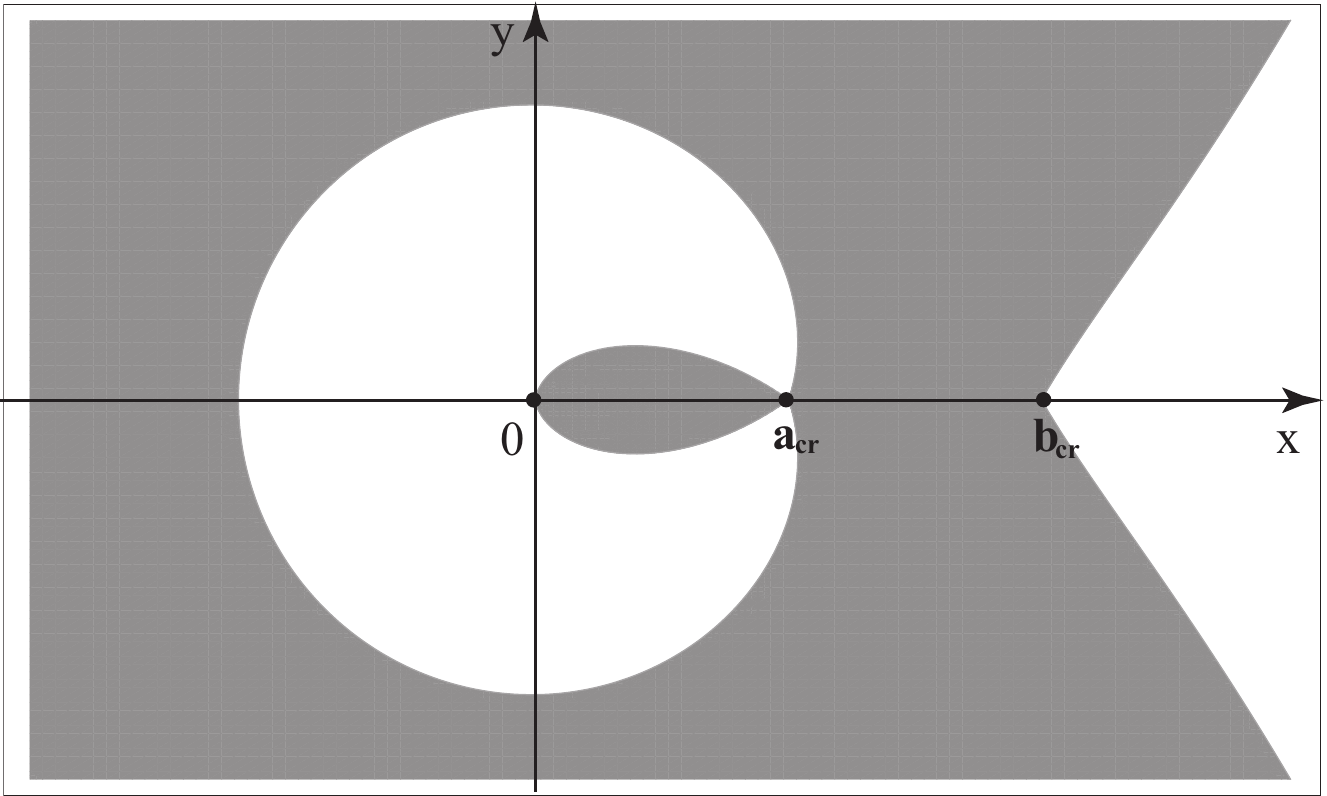} \end{center}
  \caption{The shaded region is the region where $\re\phi_{cr}(z)<0$.  Note that this figure is not the exact one  for $\phi_{cr}$ defined in \eqref{phi function-critical}. Here we have rescaled the figure, especially near $a_{cr}$,  for better illustration: because the exact value of $a_{cr}$ is too small as compared with $b_{cr}$; see \eqref{a,b, mu critical}.     }
 \label{contour-Phi-cr}
\end{figure}

%%%%%%%%%%%%%%%%%%%%%%%%%%%%%%%%%%%%%%%%%%%%%%%%%%%%%%%%%%%%%%%%%%%%%%%%%%%%%%%%%%%%%%%%%%%

\section{Nonlinear steepest descent analysis} \label{sec-RH-analysis}

In this section, we apply the nonlinear steepest descent method developed by Deift and Zhou et al.\;\cite{dkmv1,dkmv2} to the RH problem for
$Y$. The idea is to obtain, via a series of invertible transformations $ Y \to T \to S \to R$, the RH problem for $R$ whose jump matrices are close to the identity ones.

\subsection{The first transformation $Y\to T$: Normalization at infinity} \label{sec-nomalization}

We make use of the $g$-function defined in \eqref{g function} to normalize the RH problem for $Y$ in Section \ref{sec-rhp-OPs} when $k=n$. As $g(z) \sim \log z$ for large $z $, we introduce the first transformation $Y \to T$ as follows:
\begin{equation}\label{TrsnaformationY-T}
    T(z)=e^{-\frac{n l}{2}\sigma_3}Y(z) e^{-n(g(z)-\frac{l}{2})\sigma_3},
\end{equation}
where $l$ is the Lagrange multiplier in \eqref{phase condition}. Then, $T$ solves the following RH problem.
\begin{description}
\item(T1)~~ $T(z)$ is analytic in $\mathbb{C}\backslash \Gamma$; see Figure \ref{contour-ortho} for $\Gamma=\Gamma_1\cup\Gamma_2\cup\Gamma_3$;

\item(T2)~~  The jump condition is
\begin{equation}\label{T-jump}T_+(z)=T_-(z)
\left(
                               \begin{array}{cc}
                                 e^{n(g_ -(z)-g_+(z))} & c_je^{n(-V_t(z)+g_+(z)+g_-(z)-l)} \\
                                 0 & e^{n(g_ +(z)-g_-(z))} \\
                                 \end{array}
                             \right)
\end{equation}
for $z \in \Gamma_j$, $j=1,2,3$, where $V_t(z)$ is defined in \eqref{potential new},  and $c_1=1$,  $c_2=\alpha$, $c_3=1-\alpha$;

\item(T3)~~   The asymptotic behavior of $T(z)$  at infinity is
\begin{equation}\label{T-infty}T(z)= I+O(1/z)\quad \mbox{as}\quad z\rightarrow \infty .\end{equation}
\end{description}

Appealing   to the properties of $g(z)$ and $\phi_t(z)$ in \eqref{phase condition} and \eqref{g and phi}, the jump matrices in \eqref{T-jump} can be expressed in terms of the function $\phi_t(z)$ as follows:
\begin{equation}\label{T-jump-in-phi}
T_+(z)=T_-(z)\left\{
\begin{array}{ll}
 \left(
                               \begin{array}{cc}
                                 1 & c_je^{-2n \phi_t(z)} \\
                                 0 & 1 \\
                               \end{array}
                             \right),&  z\in \Gamma_j \setminus (a_{cr},b),~j=1,2,3;\\[.4cm]
                                \left(
                               \begin{array}{cc}
                                e^{2n\left (\phi_t\right )_+(z)}& 1 \\
                                 0 &e^{2n\left (\phi_t\right )_-(z)} \\
                                 \end{array}
                             \right), &  z\in(a_{cr},b).
\end{array}
\right .
\end{equation}

\subsection{The second transformation $T\to S$: Contour deformation} \label{sec-S}

Since  $\left (\phi_{t}\right )_{\pm}(z)$ are  purely imaginary on $(a_{cr},b)$, the jump matrix for $T(z)$ on $z\in(a_{cr},b)$ possesses  highly oscillatory diagonal entries. To remove the  oscillation, we open the lens near $(a_{cr},b)$ and
introduce the second transformation:
\begin{equation}\label{transformationT-S}
S(z)=\left \{
\begin{array}{ll}
  T(z), & \mbox{for $z$ outside the lens shaped region;}
  \\  [.4cm]
  T(z) \left( \begin{array}{cc}
                                 1 & 0 \\
                                  -e^{2n\phi_t(z)} & 1 \\
                               \end{array}
                             \right) , & \mbox{for $z$ in the upper lens
                             region;}\\[.4cm]
T(z) \left( \begin{array}{cc}
                                 1 & 0 \\
                                    e^{2n\phi_t(z)} & 1 \\
                               \end{array}
                             \right) , & \mbox{for  $z$  in  the  lower
                             lens
                             region. }
\end{array}\right .
\end{equation}

\begin{figure}[h]
 \begin{center}
   \includegraphics[width=12cm]{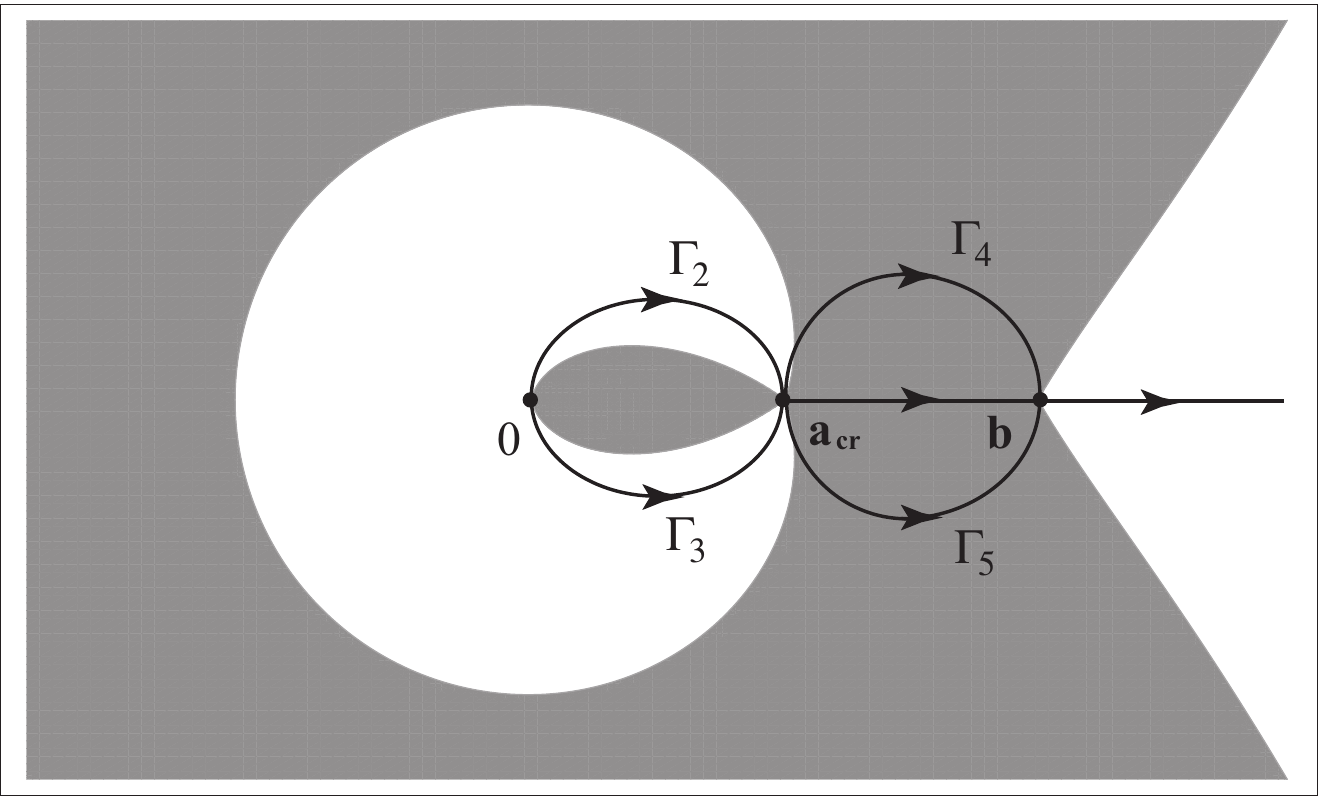} \end{center}
  \caption{Contour $\Sigma_S=(a_{cr}, b)\cup (b, \infty) \cup_{j=2}^5 \Gamma_j$. The shaded region is the region where $\re\phi_{cr}(z)<0$.}
 \label{contour-S}
\end{figure}

Then $S(z)$ solves the RH problem

\begin{description}
  \item(S1)~~ $S(z)$ is analytic  in  $\mathbb{C}\setminus\Sigma_S$; see Figure \ref{contour-S} for the contours;
  \item(S2)~~The jump conditions are
  \begin{equation}\label{S-jump}S_+(z)=S_-(z)  \left\{\begin{array}{ll}
                 \left( \begin{array}{cc}
                                 1 & 0 \\
                                  e^{2n\phi_t(z)} & 1 \\
                               \end{array}
                             \right), &  z \in \Gamma_4\cup \Gamma_5 ,\\ [.4cm]
               \left( \begin{array}{cc}
                                0&  1 \\
                                   -1 & 0 \\
                               \end{array}
                             \right), &  z\in(a_{cr},b), \\ [.4cm]
                  \left( \begin{array}{cc}
                                 1 &  \alpha e^{-2n \phi_t(z)} \\
                                  0  & 1 \\
                               \end{array} \right), &   z\in \Gamma_2,\\[.4cm]
                  \left( \begin{array}{cc}
                                 1 &  (1-\alpha)e^{-2n \phi_t(z)} \\
                                  0  & 1 \\
                               \end{array} \right), &   z\in  \Gamma_3,\\[.4cm]
                  \left( \begin{array}{cc}
                                 1 &  e^{-2n \phi_t(z)} \\
                                  0  & 1 \\
                               \end{array}\right), &   z\in (b,+\infty).
                \end{array}\right.
  \end{equation}
  \item(S3)~~ The asymptotic behavior at infinity is
  \begin{equation}\label{S-infty}
  S(z)= I+O( 1/ z),~~~\mbox{as}~~z\rightarrow \infty .
  \end{equation}
 \end{description}

To study the asymptotic behavior of $S(z)$ for large $n$, we may exam  the signs of $\re \phi_t(z)$, to see if the jumps are of the form $I$ plus exponentially small terms.
Special attention should be paid  in the present case since we are dealing with the signed measure \eqref{E-measure-modified}.  Fortunately, when $n$ is large and $t-t_{cr}$ is small enough, we can still determine the  signs
of $\re \phi_t(z)$ near the endpoint $a_{cr}$. Similar discussions can be found  in \cite{Ble:Dea,Dui:Kuij} where modified equilibrium problems are also addressed.

\begin{prop} \label{prop-phi-sign}
   Let $U$ be a neighbourhood of $a_{cr}$. Then, for any $\varepsilon > 0$, there exists a {  $\delta_T>0$} such that for all $t \in \mathbb{R}$ with $|t-t_{cr}|<{  \delta_T}$, we have $\re \phi_t(z) < -\varepsilon $ on the upper and lower lips of the lens on the outside of $U$, namely, $z\in \{\Gamma_4\cup\Gamma_5\}\setminus U$. Moreover,{
   there exists a positive $r$, such that
    $\re \phi_t(z)> \varepsilon $ on  $\{\Gamma_2\cup\Gamma_3\}\setminus U$ and on $[b+r, \infty)$.  }
\end{prop}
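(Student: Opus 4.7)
The plan is to first establish the sign pattern of $\re\phi_{cr}(z)$ on the relevant parts of the contour, and then propagate this to $\re\phi_t$ for $|t-t_{cr}|<\delta_T$ via the perturbation identity \eqref{phi-t-phi-critical}. From the variational identity \eqref{phase condition} at $t=t_{cr}$ we have $\re\phi_{cr}=0$ on $(a_{cr}, b_{cr})$, the variational inequality \eqref{variational-inequality} gives $\re\phi_{cr}>0$ on $(b_{cr},\infty)$, and the local expansion \eqref{phi-critical function at a} yields
\[
\re\phi_{cr}(a_{cr}+re^{i\alpha})\sim -C\,r^{5/2}\sin(5\alpha/2),\qquad \alpha\in(0,2\pi),
\]
so $\re\phi_{cr}<0$ in exactly three $2\pi/5$-sectors at $a_{cr}$: two straddling the band $(a_{cr},b_{cr})$ from above and below (into which the lens lips $\Gamma_4,\Gamma_5$ are opened) and one pointing in the direction $\alpha=\pi$. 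This confirms the sign picture of Figure \ref{contour-Phi-cr}; in particular $\Gamma_4\cup\Gamma_5\subset\{\re\phi_{cr}<0\}$ off $U$ and $\Gamma_2\cup\Gamma_3\subset\{\re\phi_{cr}>0\}$ off $U$, with the latter diverging to $+\infty$ as $z\to 0$ along the circle $|z-\delta|=\delta$ thanks to \eqref{phi function at 0}.

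Next I would carry out the perturbation. On $\{\Gamma_4\cup\Gamma_5\}\setminus U$, a compact set on which $\phi_0$ from \eqref{phi function -zero} is bounded, $\re\phi_{cr}$ attains a strictly negative maximum, say $-3\varepsilon$; choosing $\delta_T$ with $\delta_T\sup|\phi_0|<\varepsilon$ gives $\re\phi_t\leq-2\varepsilon$. On the compact portion of $\Gamma_2\cup\Gamma_3$ bounded away from $0$ the same argument yields $\re\phi_t\geq 2\varepsilon$. For the arcs near the origin on $\Gamma_2\cup\Gamma_3$, I would instead use the expansion \eqref{phi function at 0} directly: on $|z-\delta|=\delta$, $\re(1/z)\equiv 1/(2\delta)$, so
\[
\re\phi_t(z)=\frac{t}{4\delta}-\tfrac12\log|z|+O(1)\longrightarrow+\infty,
\]
uniformly in $t$ close to $t_{cr}$, so the estimate $\re\phi_t\geq\varepsilon$ persists there as well.

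Finally, for $[b+r,\infty)$ I would pick $r>0$ so that $b+r$ stays uniformly away from $b_{cr}$ as $t$ varies --- possible by the continuous dependence \eqref{e-measure-modified coefficients} --- and combine the positivity of $\re\phi_{cr}$ on $(b_{cr},\infty)$ with the asymptotic $\phi_t(z)\sim z/2$ at infinity (from \eqref{phi function-sign measure}) to get $\re\phi_t\geq\varepsilon$ on $[R,\infty)$ for some large $R$; the compact piece $[b+r,R]$ is handled by the perturbation argument of the previous paragraph. The main technical point is the uniformity of all estimates in $t$: on compact subsets of $\mathbb{C}\setminus\{0,a_{cr}\}$ it follows from straightforward continuity of $\phi_0$, but near the origin the argument hinges on the geometric fact that $\re(1/z)$ is constant on the circle $|z-\delta|=\delta$, which keeps the perturbation argument tractable despite the simple pole of $\phi_0$ at $0$. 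Near infinity, one must also ensure the $t$-dependent endpoint $b(t)$ does not drift into $[b+r,\infty)$, which is precisely what the bound in \eqref{e-measure-modified coefficients} guarantees.
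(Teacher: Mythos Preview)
Your overall strategy --- establish the sign of $\re\phi_{cr}$ first, then propagate to $\re\phi_t$ via \eqref{phi-t-phi-critical} --- is exactly the paper's, and your perturbation arguments (compactness away from singularities, the observation $\re(1/z)\equiv 1/(2\delta)$ on the circle near the origin, the large-$z$ behavior for $[b+r,\infty)$) are correct.

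The gap is in the first step, for $\Gamma_2\cup\Gamma_3$. You verify $\re\phi_{cr}>0$ only near the two ends of the semicircular arc --- near $a_{cr}$ via the local expansion \eqref{phi-critical function at a}, and near $0$ via \eqref{phi function at 0} --- and then simply appeal to Figure~\ref{contour-Phi-cr} for the rest. But the figure is descriptive, not a proof: the local sector picture at $a_{cr}$ and the blow-up at $0$ do not by themselves rule out $\re\phi_{cr}$ dipping to zero or below somewhere in the middle of the arc. The paper closes this gap by a direct computation: parametrizing $\Gamma_2$ by $s=\delta(1+e^{i\theta})$ with $\delta=a_{cr}/2$ and integrating along the arc, one obtains
\[
\phi_{cr}(z)=\frac{\sqrt{a_{cr}}}{4}\int_0^{\theta_z}\Bigl\{|s-b_{cr}|^{1/2}\bigl(\sin\tfrac{\theta}{2}\bigr)^{3/2}\bigl(\cos\tfrac{\theta}{2}\bigr)^{-2}\Bigr\}\,e^{i\left(\frac{5\pi}{4}+\frac{3\theta}{4}+\frac{1}{2}\arg(s-b_{cr})\right)}\,d\theta,
\]
and then checks that the argument of the integrand stays in $[2\pi-3\pi/8,\,2\pi+\pi/2)$ for all $\theta\in[0,\pi)$; hence the real part of the integrand is strictly positive and $\re\phi_{cr}$ is strictly monotone increasing along the arc from its value $0$ at $a_{cr}$. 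This monotonicity computation is the ingredient your outline is missing.

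A secondary point: your compactness claim that $\re\phi_{cr}$ attains a strictly negative maximum on $\{\Gamma_4\cup\Gamma_5\}\setminus U$ is not literally true, since $\re\phi_{cr}\to 0$ as $z\to b$ along the lens. The paper's proof explicitly restricts to the portion of the lens also bounded away from the soft edge $b$ (the Airy parametrix covers the neighborhood of $b$ separately), and you should make the same restriction.
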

\begin{proof}
{   In view of \eqref{e-measure-modified coefficients}, we see that the factor $x^2+d_1x +d_0$ in \eqref{E-measure-modified} possesses a pair of zeros
\begin{equation*}
  x_\pm =a_{cr} \pm \frac 1 {\sqrt 2} \left ( \frac { a_{cr}} {b_{cr}}\right )^{1/4} (t-t_{cr})^{1/2} +O(t-t_{cr}).
\end{equation*} One can choose $\delta_T$ small enough,  so that $x_\pm\in U$.
Similar to }
  those {  conducted} in \cite[Prop.\;4.2]{Dui:Kuij} and \cite[p.23]{Ble:Dea}, {  we can prove that  the  jumps on  the portions  of $\Gamma_4$ and $\Gamma_5$, outside of $U$ and keeping a distance from the soft edge $b$,
  are of the form $I$ plus an exponentially small term.

  Next,  we   estimate $\re \phi_t(z)$ on $\Gamma_2\cup\Gamma_3\cup (b, \infty)$ in a straightforward manner, using the  explicit representations of the $\phi$-functions  \eqref{phi function-sign measure}, \eqref{phi function-critical}  and \eqref{phi-t-phi-critical}.    In view of  \eqref{phi-t-phi-critical}, we need only check the critical case for $\phi_{cr}$, and it is readily seen from \eqref{phi function-critical} that
   \begin{equation*}
    \re \left (\phi_{cr}\right )_\pm(x)=  \frac 1{2}\int_{b_{cr}}^x\frac{(s-a_{cr})^{3/2}(s-b_{cr})^{1/2}}{s^2}ds>0~~\mbox{for}~ x>b_{cr}.
   \end{equation*}
 For $z$ on the semi-circle $\Gamma_2$; cf. Figure \ref{contour-ortho}, we take the integration path to be the arc from $a_{cr}$ to $z$, and   adapt the parametrization $s=\delta+\delta e^{i\theta}$, where $\delta=a_{cr}/2$ and $\theta \in [0, \pi)$.
  As a result, we have
    \begin{equation}\label{phi function-critical-real}
  \phi_{cr}(z)= \frac {\sqrt {a_{cr}}} 4 \int_{0}^{\theta_z}\left\{ |s-b_{cr}|^{\frac 12} \left (\sin \frac \theta 2\right ) ^{\frac 3 2} \left (\cos\frac \theta 2\right )^{-2}\right\}   e^{i\left ( \frac {5\pi} 4+\frac {3\theta} 4+\frac 1 2 \arg (s-b_{cr})\right )} d\theta,
  \end{equation}
 where $\theta_z\in [0, \pi)$ such that $z=\delta+\delta e^{i\theta_z}$.  What is more, we have $\arg (s-b_{cr})\in [ \pi-\arcsin A, \pi]\subset [3\pi/4,    \pi]$ on $\Gamma_2$, where $A=\frac {a_{cr}}{2b_{cr}-a_{cr}}$ so that  $\arcsin A\approx 0.0021\pi$. Hence the argument of the integrand  lies in the interval $[2\pi -3\pi/8, 2\pi+\pi/2)$, so long as $\theta\in [0, \pi)$.
  Therefore, from \eqref{phi function-critical-real} we see that $\phi_{cr}(a_{cr})=0$,  and $\re \phi_{cr}(z)$ is strictly  monotonically  increasing as $z\in \Gamma_2$ goes away from $a_{cr}$.
 The same result holds for $z\in \Gamma_3$. It is worth mentioning that
    $\re \phi_t(z)\to +\infty$ as  $z\to 0$, $z\in \Gamma_2\cup\Gamma_3$;  see the formula \eqref{phi function at 0} and the discussion that follows. We  note that the estimates on the lens boundaries $\Gamma_4$ and $\Gamma_5$ can also be obtained from the representations  \eqref{phi function-sign measure}, \eqref{phi function-critical}  and \eqref{phi-t-phi-critical}.
     }
\end{proof}

\subsection{The global parametrix} \label{sec-N}

Having had  Proposition \ref{prop-phi-sign}, we see from \eqref{S-jump} that the jump matrix for $S$ is the identity matrix, plus an exponentially
small term, for fixed $z$ bounded away from the interval $(a_{cr},b)$. Neglecting the exponentially small terms, we arrive at an approximating RH problem for $N(z)$ as follows:

\begin{description}
\item(N1)~~  $N(z)$ is analytic  in  $\mathbb{C}\backslash
[a_{cr},b]$;
\item(N2)~~   \begin{equation}\label{N-jump} N_{+}(x)=N_{-}(x)\left(
       \begin{array}{cc}
       0 & 1 \\
       -1 & 0 \\
       \end{array}
       \right)~~~\mbox{for}~~x\in  (a_{cr},b);\end{equation}
\item(N3)~~    \begin{equation}\label{N-infty} N(z)= I+O( 1/ z) ~~~\mbox{as}~~z\rightarrow\infty .\end{equation}
  \end{description}

The solution to the above RH problem is constructed explicitly as
 \begin{equation}\label{N-solution}
 N(z)=M^{-1}    \varrho(z)^{-\sigma_3}   M,
 \end{equation}
 where $M=(I+i\sigma_1) /{\sqrt{2}}$ and $\varrho
(z)=\left(\frac{z-b} {z-a_{cr}}\right)^{1/4}$ with $\arg (z-a_{cr})\in (-\pi, \pi)$ and $\arg (z-b)\in (-\pi, \pi)$.

The jump matrices of $S(z)N(z)^{-1}$ are not uniformly close to the identical  matrix $I$ near the endpoints $a_{cr}$ and $b$, thus local parametrices have to be constructed in neighborhoods of these endpoints.

\subsection{The local  parametrix at the soft edge} \label{sec-local parametrix-Airy}

The local parametrix at the right endpoint $z = b$ is the same as that of the Laguerre polynomials at the soft edge. More precisely, the parametrix is to be constructed in $U(b, r) = \{ z ~| \; |z-b|  <r \}$, $r$ being a fixed positive number, such that

\begin{description}
  \item(a)~~ $P^{(b)}(z)$ is analytic in $U(b,r) \backslash \Sigma_{S}$; see Figure \ref{contour-S} for the contour $\Sigma_{S}$;
  \item(b)~~ In  $U(b,r)$, $P^{(b)}(z)$ satisfies the same jump conditions as $S(z)$ does; cf. \eqref{S-jump};
  \item(c)~~  $P^{(b)}(z)$ fulfils the following  matching condition
  on $\partial U(b,r)$:
\begin{equation}\label{matchingPb-N}
P^{(b)}(z)N^{-1}(z)=I+ O\left ( \frac 1n\right ).
 \end{equation}
\end{description}

The parametrix can be constructed, out   of the Airy function
and its derivative, as in \cite[(3.74)]{vanlessen}; see also \cite{deift,dkmv2}.

\subsection{Local parametrix at the critical point $z=a_{cr}$ and Painlev\'{e} I} \label{sec-local parametrix-painleve}

Now we focus on the construction of the parametrix at the endpoint $a_{cr}$. We seek a parametrix in $U(a_{cr}, r)=\{z~|\;|z-a_{cr}|<r \}$, $r$ being a fixed positive number, such that the following RH problem is satisfied:
\begin{description}
  \item(a)~~ $P^{(a)}(z)$ is analytic in $U(a_{cr},r) \backslash  \Sigma_{S}$; cf. Figure \ref{contour-S};
  \item(b)~~ In  $U(a_{cr}, r)$, $P^{(a)}(z)$ satisfies the same jump conditions as $S(z)$ does; cf. \eqref{S-jump};
  \item(c)~~  $P^{(a)}(z)$ fulfils the following  matching condition
   on  $\partial U(a_{cr},r)$:
\begin{equation}\label{matchingPa-N}
P^{(a)}(z)N^{-1}(z)=I+ O\left ( n^{-  1/5}\right )~~\mbox{as}~n \to \infty.
 \end{equation}
\end{description}

First we  make  a transformation to convert all the jumps of the RH problem for $P^{(a)}(z)$ to
constant jumps. Let us define
\begin{equation}\label{p hat a}
P^{(a)}(z)=\hat{P}^{(a)}(z)e^{n\phi_{t}(z)\sigma_3}, ~~z\in U(a_{cr},r) \backslash  \Sigma_{S},
\end{equation}
then it is readily verified  that $\hat P^{(a)}(z)$ satisfies a RH problem as follows:

\begin{description}
  \item(a)~~ $\hat{P}^{(a)}(z)$ is analytic in $U(a_{cr},r) \backslash  \Sigma_{S}$;
  \item(b)~~ In  $U(a_{cr},r)$, $\hat{P}^{(a)}(z)$ satisfies the  jump conditions
   \begin{equation}\label{P-a-hat-jump}
 \hat{P}^{(a)}_+(z)=\hat{P}^{(a)}_-(z)
 \left\{\begin{array}{ll}
          \left(
                               \begin{array}{cc}
                                 1 & \alpha  \\
                                0 & 1 \\
                                 \end{array}
                             \right), & z \in \Gamma_2, \\[.4cm]
           \left(
                               \begin{array}{cc}
                                 1 & 1-  \alpha \\
                             0& 1 \\
                                 \end{array}
                             \right), & z \in \Gamma_3, \\[.4cm]
          \left(
                               \begin{array}{cc}
                                0 &1\\
                               -1&0 \\
                                 \end{array}
                             \right),  &  z\in (a_{cr},a_{cr}+r), \\[.4cm]
           \left(
                               \begin{array}{cc}
                                 1 &0 \\
                                1 &1 \\
                                 \end{array}
                             \right), & z \in \Gamma_4 \cup \Gamma_5.
        \end{array}\right . \end{equation}

 \end{description}

We are now in a position to construct a solution for the above RH problem by using the $\Psi$-function associated with the Painlev\'{e} I equation, as introduced in Section \ref{sec-model Riemann-Hilbert problem}. We   note that $\hat{P}^{(a)}(-z)\, e^{\frac {\pi i\sigma_3}4}$ shares the same jumps with the $\Psi$-function. Then, we establish  the following conformal mapping:
\begin{equation}\label{f-function}
f(z)=\left(\frac 54\phi_{cr}(z)\right)^{2/5} \quad \textrm{for } z\in U(a_{cr},r),
\end{equation}and $r$ being sufficiently small; cf. \eqref{phi-critical function at a}. Indeed, in view of \eqref{phi function-critical},
 one can improve \eqref{phi-critical function at a} to obtain
\begin{equation}
  f(z)=-(b_{cr}-a_{cr})^{\frac 15}(2a_{cr})^{-\frac 45}(z-a_{cr})\left( 1+\frac {17}{25}\left (\frac 1{2(a_{cr}-b_{cr})}-\frac 2{a_{cr}}\right )(z-a_{cr})+ \cdots \right)
\end{equation}
as $z \to a_{cr}$. Moreover, we have
\begin{equation}\label{f expand}
f(z)^{5/2}=-\frac{5}{4}\phi_{cr}(z),
\end{equation}
where the fractional power takes the principle branch. We also define
\begin{equation}\label{q}
q(z)=-(t-t_{cr})\frac {\phi_0(z)}{f(z)^{1/2}},
\end{equation}
where the square root takes the principal branch again. It follows from the definitions of $\phi_0(z)$ and $f(z)$ in \eqref{phi function -zero} and \eqref{f-function} that  $q(z)$ is analytic in a neighborhood of $z=a_{cr}$ and
\begin{equation}\label{q-expand}
q(a_{cr})=-(2a_{cr})^{-\frac 35}(a_{cr}b_{cr})^{-\frac 12}(b_{cr}-a_{cr})^{\frac 25}(t-t_{cr}).
\end{equation}
Moreover, we have
\begin{equation}
\theta(n^{\frac 25}f(z),n^{\frac 45}q(z))=-n\phi_t(z),
\end{equation}
where $\theta(\zeta,s)$ is the function defined in \eqref{theta-def}.

With all these preparations,  the parametrix near the endpoint $a_{cr}$ can be constructed explicitly as
\begin{equation}\label{p-a}
P^{(a)}(z)=E(z)\Psi\left (n^{\frac 25}f(z),n^{\frac 45}q(z)\right )e^{-\frac {\pi i\sigma_3}4} e^{n\phi_{t}(z)\sigma_3},
\end{equation}
where $E(z)$ is defined as
\begin{equation}\label{E}
E(z)=N(z)e^{\frac {\pi i\sigma_3}4}\frac {\sigma_3+\sigma_1}{\sqrt{2}}(n^{\frac 25}f(z))^{-\frac 1 4\sigma_3 }.
\end{equation}
It is ready to see that $E(z)$ is analytic in $U(a_{cr},r)$ and the function $P^{(a)}(z)$ in \eqref{p-a} indeed satisfies the matching condition \eqref{matchingPa-N}.

{  \begin{rmk} \label{rmk-matching}
  As we have discussed in Section \ref{sec-model Riemann-Hilbert problem}, the solution $\Psi(\zeta; s)$ exists if and only if $s$ is not a pole of $y_\alpha(s)$. Then, to make $P^{(a)}(z)$ in \eqref{p-a} well-defined, we  choose $t - t_{cr} = O(n^{-4/5})$ as $n \to \infty$, and require $n^{\frac 45}q(z)$ is not a pole of $y_\alpha(s)$. Moreover, from  the large-$\zeta$ behavior of $\Psi(\zeta; s)$ in \eqref{Psi-infty}, we can verify the desired matching condition  on  $\partial U(a_{cr},r)$ in \eqref{matchingPa-N}.
\end{rmk} }

\subsection{The final transformation $S\to R$}

With all the parametrices constructed, let us introduce the final transformation
\begin{equation}\label{transformationS-R}
R(z)=\left\{ \begin{array}{ll}
                S(z)N^{-1}(z), & z\in \mathbb{C}\backslash \left \{ U(a,r)\cup U(b,r)\cup \Sigma_S \right \};\\[.1cm]
               S(z) (P^{(a)})^{-1}(z), & z\in   U(a,r)\backslash \Sigma_{S} ;  \\[.1cm]
 %              S(z) (P^{(0)})^{-1}(z), & z\in   U(0,\epsilon)\backslash \Sigma_{S} ;  \\[.1cm]
               S(z)  (P^{(b)})^{-1}(z), & z\in   U(b,r)\backslash
               \Sigma_{S} .
             \end{array}\right .
\end{equation}
Then $R(z)$ satisfies a RH problem as follows:
\begin{description}

  \item(R1)~~ $R(z)$ is analytic in $\mathbb{C}  \backslash \Sigma_R$; see Figure \ref{contour-R};

  \item(R2)~~ $R(z)$ satisfies the  jump conditions
  \begin{equation}\label{R-jump}
    R_+(z)=R_-(z)J_{R}(z), ~~z\in\Sigma_R,
  \end{equation}
  where
    \begin{equation*}
    J_R(z)=\left\{ \begin{array}{ll}
                    P^{(a)}(z)N^{-1}(z), ~ &   z\in\partial U(a_{cr},r),\\[.1cm]
                     P^{(b)}(z)N^{-1}(z),&  z\in\partial
                    U(b,r),\\[.1cm]
    N(z)J_S(z)N^{-1}(z), ~& \Sigma_R\setminus \partial
                   ( U(a_{cr},r)\cup  U(b,r));
                 \end{array}\right .
    \end{equation*}

  \item(R3)~~  $R(z)$ satisfies the following behavior at infinity:
  \begin{equation}\label{R-infty}
  R(z)= I+ O\left({1}/{z} \right),~~\mbox{as}~ z\rightarrow\infty .
  \end{equation}

  \begin{figure}[h]
 \begin{center}
   \includegraphics[width=10cm]{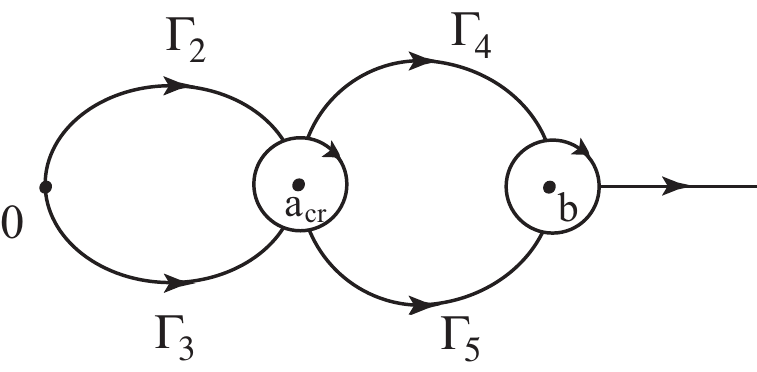} \end{center}
  \caption{Contour $\Sigma_R$}
 \label{contour-R}
\end{figure}
\end{description}

Based on the matching conditions \eqref{matchingPb-N}, \eqref{matchingPa-N} and the properties of the function $\phi_{t}(z)$ stated  in Proposition \ref{prop-phi-sign}, we have the following estimates:
\begin{equation}\label{R-jump-approx}J_R(z)=\left\{ \begin{array}{ll}
                     I+O\left (n^{-1/5}\right ),&  z\in\partial
                    U(a_{cr},r),\\[.1cm]
                     I+O\left (\frac 1n\right ),&  z\in\partial
                    U(b,r),\\[.1cm]
I+O(e^{-cn}), ~& z\in  \Sigma_R\setminus (\partial
                    U(a,r)\cup  \partial U(b,r)),
                 \end{array}\right .
\end{equation}
where $c$ is a positive constant, and the error term is uniform for $z$ on the corresponding contours.
       {  Here we also require that $t-t_{cr} = O(n^{-4/5})$; see Remark \ref{rmk-matching}.}
Then, applying the standard argument of integral operator and using
the technique of deformation of contours, see for example \cite{deift},
{  we can see that $R(z)$ exits when $n$ is large enough and $t$ is close to $t_{cr}$. Moreover, we have}
\begin{equation}\label{R-approx}
R(z)=I+O(n^{-1/5}),
\end{equation}
uniformly for $z$ in the whole complex plane.

This completes the nonlinear steepest descent analysis.

%%%%%%%%%%%%%%%%%%%%%%%%%%%%%%%%%%%%%%%%%%%%%%%%%%%%%%%%%%%%%%%%%%%%%%%%%%%%%%%%%%%%%%%%%%%

\section{Proof of the main results}\label{Proof of main results}\label{sec-proofs}

To prove our main results, Theorems \ref{Theorem: Asymptotic of Hankel} and \ref{Theorem: Asymptotic of recurrence coff}, we need more refined asymptotic approximations  for $R(z)$ than the one we get in \eqref{R-approx}.

\subsection{Asymptotic approximation  of $R(z)$}

For this purpose, we derive an asymptotic approximation for $R(z)$. To simplify the statement of our result, we denote
\begin{equation*}
  \sigma_+= \left(
              \begin{array}{cc}
                0 &1 \\
                0 & 0 \\
              \end{array}
            \right)~~\mbox{and}~~\sigma_-= \left(
              \begin{array}{cc}
                0 &0 \\
                1 & 0 \\
              \end{array}
            \right).
\end{equation*}

\begin{prop}
  Let $M=\frac 1{\sqrt{2}}(I+i\sigma_1)$ and $\mathcal{H}=\mathcal{H}(n^{4/5}q(z))$; see \eqref{P1-Hamilton}.
  {
  For $n\to \infty$, $t \to t_{cr} $ as in \eqref{s-t-double scaling}, and $s^*$ is not a pole of the tronqu\'{e}e solution $y_\alpha(s)$,}   we have
    \begin{equation}\label{R-expand}
    R(z)=I+M^{-1} \biggl( \frac {R^{(1)}(z)}{n^{1/5}} + \frac {R^{(2)}(z)}{n^{2/5}} + O\left (\frac 1 {n^{3/5}} \right ) \biggr) M,
  \end{equation}
where
\begin{equation}\label{R-1}
R^{(1)}(z)= \begin{cases}
  \D\frac {r_1\mathcal{H}(s^*)}{z-a_{cr}}\sigma_-, & z\in \mathbb{C}\setminus U(a_{cr},r), \vspace{2mm} \\
  -\D\frac {i\mathcal{H}}{\varrho^2\sqrt{f}}\sigma_+ + \left(\frac{i\mathcal{H}\varrho^2}{\sqrt{f}}+ \frac {r_1 \mathcal{H}(s^*)}{z-a_{cr}} \right)\sigma_-, & z\in U(a_{cr},r)
\end{cases}
\end{equation}
and
\begin{equation}\label{R-2}
R^{(2)}(z)=\frac {r_2}{z-a_{cr}}(y_\alpha(s^*)-\mathcal{H}^2(s^*))\sigma_3, \quad z\in \mathbb{C}\setminus  U(a_{cr},r).
\end{equation}
The constants $r_1$ and  $r_2$ in the above formulas are given as
\begin{equation}\label{r-i}
  r_1=-i\biggr( 2a_{cr}(b_{cr}-a_{cr}) \biggl)^{\frac 25},~~
  r_2=-\D \frac {a_{cr}^{4/5}}{(2(b_{cr}-a_{cr}))^{1/5}},
\end{equation}and $s^*=n^{4/5}q(a_{cr})=n^{4/5}(t_{cr}-t)(2a_{cr})^{-\frac 35}(a_{cr} b_{cr})^{-\frac 12}(b_{cr}-a_{cr})^{\frac 25}$, cf. \eqref{s-t-double scaling}.
\end{prop}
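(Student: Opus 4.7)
The plan is to upgrade the crude estimate $R = I + O(n^{-1/5})$ to a full asymptotic expansion in powers of $n^{-1/5}$ by expanding the jump matrix $J_R$ on $\partial U(a_{cr},r)$ via Kapaev's expansion \eqref{Psi-infty} for $\Psi$ at infinity, then solving the resulting additive Riemann--Hilbert problems for the coefficients $R^{(1)}, R^{(2)}$ by Plemelj/residue calculus. Away from $\partial U(a_{cr},r)$ the jumps are exponentially small or of order $1/n$ by \eqref{R-jump-approx}, so they contribute only to higher-order error.

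For the jump expansion, I would insert \eqref{Psi-infty} into the formula \eqref{p-a} for $P^{(a)}$, use the identity $\theta(n^{2/5}f(z),n^{4/5}q(z)) = -n\phi_t(z)$ together with $((\sigma_3+\sigma_1)/\sqrt 2)^{2} = I$, and cancel $E(z)\cdot(n^{2/5}f)^{\sigma_3/4}(\sigma_3+\sigma_1)/\sqrt 2$ against $N(z)e^{i\pi\sigma_3/4}$. A short calculation then yields, on $\partial U(a_{cr},r)$,
\[
P^{(a)}(z)N^{-1}(z) = N(z)e^{\frac{i\pi}{4}\sigma_3}\!\Bigl(I + \tfrac{\Psi_{-1}(s^*_z)}{(n^{2/5}f(z))^{1/2}} + \tfrac{\Psi_{-2}(s^*_z)}{n^{2/5}f(z)} + O(n^{-3/5})\Bigr) e^{-\frac{i\pi}{4}\sigma_3}N(z)^{-1},
\]
where $s^*_z = n^{4/5}q(z) = s^* + O(z-a_{cr})$ by \eqref{q-expand}. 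Substituting the explicit formulas $\Psi_{-1} = -\mathcal H_\alpha\sigma_3$ and $\Psi_{-2} = \tfrac12(\mathcal H_\alpha^{2} I + y_\alpha\sigma_1)$ stated after \eqref{tronquee-sol} and using $N = M^{-1}\varrho^{-\sigma_3}M$ for the conjugation then produces $J_R - I$ in powers of $n^{-1/5}$, with coefficients explicit in $\varrho(z)$, $f(z)$, $\mathcal H_\alpha(s^*)$ and $y_\alpha(s^*)$. This is where the overall $M^{-1}(\cdot)M$ structure of \eqref{R-expand} is born.

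Next, write $R = I + R^{(1)}/n^{1/5} + R^{(2)}/n^{2/5} + O(n^{-3/5})$ and substitute into $R_+ = R_-J_R$ to obtain additive RH problems for each $R^{(k)}$ whose jumps are supported on $\partial U(a_{cr},r)$. For $z$ outside the disk, the Cauchy integral collapses to a residue at $s = a_{cr}$ of the Laurent expansion of the associated jump coefficient. Using $f(z) \sim -(b_{cr}-a_{cr})^{1/5}(2a_{cr})^{-4/5}(z-a_{cr})$ from \eqref{phi-critical function at a} and the branch behaviour of $\varrho^2(z) = \sqrt{(z-b)/(z-a_{cr})}$ near $a_{cr}$, one reads off the $(s-a_{cr})^{-1}$ coefficients of $\varrho^{\pm 2}/\sqrt f$, which pin down the constants $r_1$ and $r_2$ in \eqref{r-i}. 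For $z$ inside the disk one must add the local value of $J_R - I$ as well, which supplies the $-i\mathcal H/(\varrho^2\sqrt f)\,\sigma_+$ and $i\mathcal H\varrho^2/\sqrt f\,\sigma_-$ pieces of \eqref{R-1}. At the next order, $R^{(2)}$ receives two contributions: a direct one coming from $\Psi_{-2}/f$, and an iterative one from $R^{(1)}(s)\cdot[\text{leading jump}](s)$ built into $R_-(J_R - I)$. Since $\sigma_-\sigma_+$ is diagonal and $M\sigma_1 M^{-1} = \sigma_1$, both contributions become diagonal in the $M$-conjugated frame, giving the $\sigma_3$ structure of \eqref{R-2}, while the combination $y_\alpha - \mathcal H^2$ arises from the scalar piece $\mathcal H_\alpha^2 I$ of $\Psi_{-2}$ being partially cancelled by the iterative contribution, leaving the $y_\alpha\sigma_1$ piece intact.

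The hardest part is bookkeeping. Three issues in particular deserve care: (i) tracking the $M$- and $e^{\pm i\pi\sigma_3/4}$-conjugations, which mix $\sigma_\pm$, $\sigma_2$, $\sigma_3$ components according to $M\sigma_3 M^{-1} = \sigma_2$ and $M\sigma_1 M^{-1} = \sigma_1$, to identify unambiguously the $\sigma_+$, $\sigma_-$, $\sigma_3$ coefficients in \eqref{R-1}--\eqref{R-2}; (ii) computing the Laurent coefficients of $\varrho^{\pm 2}/\sqrt f$ at $z = a_{cr}$, where $f$ has a simple zero and $\varrho^{\pm 2}$ a genuine $(z-a_{cr})^{\mp 1/2}$ singularity, in order to confirm the explicit residues $r_1, r_2$; and (iii) justifying replacement of $q(z)$ by $q(a_{cr})$ in the Painlev\'e data, which is legitimate because $q(z) - q(a_{cr}) = O(z-a_{cr})$ while $q(a_{cr}) = O(n^{-4/5})$ by \eqref{q-expand}, so the induced error in the first two terms is $O(n^{-3/5})$ and can be absorbed into the error term in \eqref{R-expand}.
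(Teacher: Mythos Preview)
Your plan is essentially the same as the paper's own proof: expand $J_R = P^{(a)}N^{-1}$ on $\partial U(a_{cr},r)$ via the $\Psi$-asymptotics \eqref{Psi-infty}, substitute into $R_+ = R_- J_R$ to obtain additive RH problems for $R^{(1)}$ and $R^{(2)}$, and solve these by Plemelj/residue calculus. Your discussion of the $M$-conjugation bookkeeping and of the iterative contribution $R^{(1)}_-\cdot(J_R-I)$ to the $R^{(2)}$ jump is in fact more explicit than the paper's rather terse account.

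One small correction on point (iii): there is no need to ``justify replacement of $q(z)$ by $q(a_{cr})$'' as a separate approximation step, and your stated reason (that $q(a_{cr}) = O(n^{-4/5})$) is not the operative one. On $\partial U(a_{cr},r)$ the difference $n^{4/5}q(z) - s^*$ is $O(r)$, not small. What actually happens is that $\mathcal H_\alpha(n^{4/5}q(z))$ and $y_\alpha(n^{4/5}q(z))$ are analytic in $z$ near $a_{cr}$, so in the residue computation at $z=a_{cr}$ they are automatically evaluated at $s^* = n^{4/5}q(a_{cr})$; inside the disk the paper accordingly keeps the full $\mathcal H = \mathcal H(n^{4/5}q(z))$ in \eqref{R-1}. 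This does not affect the correctness of your overall argument.
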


\begin{proof}

  From \eqref{Psi-infty} and \eqref{R-jump-approx}, we derive  an  asymptotic expansion for the jump $J_R(z)$
\begin{equation}\label{R-jump-expand}
J_{R}(z)=P^{(a)}(z)N^{-1}(z)=I+\sum_{k=1}^{\infty}\frac {J_{R}^{(k)}(z)}{n^{k/5}}, \qquad z\in \partial U(a_{cr},r),
\end{equation}
where
\begin{equation}\label{R-jump-expand-terms}
 J_{R}^{(k)}(z)=N(z)e^{\pi i\sigma_3/4} \, \Psi_{-k}(n^{\frac 45}q(z)) \, e^{-\pi i\sigma_3/4}N(z)^{-1}(f(z))^{-k/2},~~k=1,2,\cdots .
\end{equation}
For $z\in \partial U(b,r)$, $J_{R}(z)$ is expanded in terms of $n^{-k}$ for non-negative integers $k$. Thus, we have \eqref{R-expand} for large $n$.

To derive the explicit formulas of $R^{(1)}(z)$ and $R^{(2)}(z)$, we combine \eqref{R-jump}, \eqref{R-expand} with  \eqref{R-jump-expand}. Then  one can see that  $R^{(1)}(z)$ and $R^{(2)}(z)$ satisfy  RH problems as follows:

\begin{itemize}
  \item[(i)] $R^{(1)}(z)$ and $R^{(2)}(z)$ are analytic for $z \in \mathbb{C} \setminus \partial U(a_{cr}, r)$;

  \item[(ii)] $R^{(1)}(z)$ and $R^{(2)}(z)$ satisfy the  jump conditions
  \begin{equation}\label{R-1-RHP}
   R^{(1)}_+(z)-R^{(1)}_-(z)=\frac {i\mathcal{H}}{\varrho^2\sqrt{f}}\sigma_+-\frac{i\mathcal{H}\varrho^2}{\sqrt{f}}\sigma_-, \  z\in \partial U(a_{cr},r)
  \end{equation}
  and
  \begin{equation}\label{R-2-RHP}
   R^{(2)}_+(z)-R^{(2)}_-(z)=\frac {\mathcal{H}^2I+y_\alpha\sigma_3}{2f}+R_-^{(1)}(z)\left (\frac {i\mathcal{H}}{\varrho^2\sqrt{f}}\sigma_+ -\frac{i\mathcal{H}\varrho^2}{\sqrt{f}}\sigma_-\right ), \  z\in \partial U(a_{cr},r),
  \end{equation}

  \item[(iii)] As $z \to \infty$, both $R^{(1)}(z)$ and $R^{(2)}(z)$ are of order $O(z^{-1})$.

\end{itemize}

 It follows from  the Plemelj formula that  $R^{(1)}(z)$ and $R^{(2)}(z)$  can be represented as the Cauchy-type integrals on the circular contour $\partial U(a_{cr},r)$ of the right-hand terms in \eqref{R-1-RHP} and \eqref{R-2-RHP}, respectively.
The above RH problems can then be  solved by conducting  residue calculations of the Cauchy-type integrals. As a direct consequence we obtain \eqref{R-1} and \eqref{R-2}.
\end{proof}

\subsection{Proof of the main theorems} \label{sec-proof of theorem}

Now we are ready to derive the large-$n$ asymptotics for the recurrence coefficients and the Hankel determinant, as stated in  our main theorems, Theorems \ref{Theorem: Asymptotic of Hankel} and \ref{Theorem: Asymptotic of recurrence coff}.

\bigskip

\noindent\emph{Proof of Theorems \ref{Theorem: Asymptotic of Hankel} and \ref{Theorem: Asymptotic of recurrence coff}.} Tracing back the transformations $R \to S \to T \to Y$ in \eqref{TrsnaformationY-T}, \eqref{transformationT-S} and \eqref{transformationS-R}, we have
\begin{equation}\label{Y-trace-back}
    Y(z)=e^{\frac {nl}2\sigma_3} R(z) N(z)   e^{n(g(z)-\frac 1 2l)\sigma_3}
\end{equation}
for $z$ close to the origin. To apply the differential identities \eqref{differential identity} in Lemma \ref{lem:differential identity},
we need to extract the asymptotics of $Y(0)$ when $k=n$, $n\to \infty$,
         {  $t \to t_{cr} $ as in \eqref{s-t-double scaling}, and $s^*$ is not a pole of the tronqu\'{e}e solution $y_\alpha(s)$.}  From the explicit formula of $N(z)$ in \eqref{N-solution} and the approximation  of $R(z)$ in \eqref{R-expand}, we have
\begin{align}\label{R-0 N-0 estimate}
  R(0) N(0)=& \frac 12 \left(
               \begin{array}{cc}
                 d+\frac 1d &- i(d-\frac 1d) \\
                i( d-\frac 1d) & d+\frac 1d \\
               \end{array}
             \right)
  -\frac {r_1 \mathcal{H}(s^*) }{2a_{cr} d \, n^{1/5}}\left(
                                  \begin{array}{cc}
                                    -i & 1 \\
                                    1 & i \\
                                  \end{array}
                                \right)\nonumber\\
  &   -\frac {r_2\left (y_\alpha(s^*)-\mathcal{H}^2(s^*)\right )}{2a_{cr}n^{2/5}} \left(
               \begin{array}{cc}
                 \frac 1d-d & i(d+\frac 1d) \\
                -i( d+\frac 1d) & \frac 1d-d \\
               \end{array}
             \right)
             +O\left (\frac 1{n^{3/5}}\right ),
\end{align}
where %$d=e^{-\frac {\pi i}4}\left(\frac ba\right)^{\frac 14}$
$d=\left(\frac {b} {a_{cr}} \right)^{ 1/4}$ and the constants $r_j$ are defined in \eqref{r-i}. Now  \eqref{differential identity} in Lemma \ref{lem:differential identity} implies that
\begin{equation}\label{Asymptotic: H}
   \frac d{dt}H_{n,n}(t)=-n^2(R(0)N(0))_{12}(R(0)N(0))_{21}.
\end{equation}
Then the asymptotic formula  \eqref{introduction: asymptotic of  H}   for the Hankel determinant follows  immediately from the   identities \eqref{R-0 N-0 estimate} and \eqref{Asymptotic: H}.
This completes the proof of Theorem \ref{Theorem: Asymptotic of Hankel}.

To derive the asymptotics of the recurrence coefficient $\beta_{n,n}$ and the leading coefficient $\gamma_{n,n}$, we use the relations
\begin{equation}\label{recurrence coeff and Y at infinity}
   \beta_{n,n}=(Y_{-1})_{12}(Y_{-1})_{21},\qquad \gamma_{n,n}^2=-\frac {1}{2\pi i(Y_{-1})_{12}},
\end{equation}
where $Y_{-1}$ is the coefficient of the $O(1/z)$ term in the asymptotic expansion of $Y(z)z^{-n\sigma_3}$, that is,
\begin{equation}
  Y(z)z^{-n\sigma_3}=I+\sum_{j=1}^{\infty}\frac {Y_{-j}}{z^j} \quad \textrm{as } \ z \to \infty.
\end{equation}
Therefore, we also need to expand $N(z)$ and $R(z)$ as $z \to \infty.$ Again, using the explicit formula of $N(z)$ in \eqref{N-solution} and the asymptotic approximation of $R(z)$ in \eqref{R-expand}, we have
\begin{equation}\label{N at infinity}
   N(z)= I+\frac {N_{-1}}z+O\left (\frac 1{z^2}\right ), \quad \textrm{with } N_{-1}=\frac 14(b-a_{cr})M^{-1}\sigma_3M,
\end{equation}
and
\begin{equation}\label{R at infinity}
   R(z)= I+\frac {R_{-1}}z+O\left (\frac 1{z^2}\right ),
\end{equation}
where
\begin{equation}\label{R-1 estimate}
   R_{-1}=\frac {r_1 \mathcal{H}(s^*)M^{-1}\sigma_-M}{n^{1/5}}+\frac {r_2(y_\alpha(s^*)-\mathcal{H}(s^*)^2)M^{-1}\sigma_3M}{n^{2/5}}+O\left (\frac 1 {n^{3/5}}\right ).
\end{equation}
Recalling the values of $r_1$ and $r_2$ in \eqref{r-i}, and the identities $M^{-1}\sigma_-M=\frac 12 \left(
                             \begin{array}{cc}
                               -i & 1 \\
                               1 & i\\
                             \end{array}
                           \right)
$ and $M^{-1}\sigma_3M= \left(
                             \begin{array}{cc}
                             0 & i\\
                               -i & 0\\
                             \end{array}
                           \right)
$, we have from \eqref{Y-trace-back} and \eqref{recurrence coeff and Y at infinity} that
\begin{align*}
  \beta_{n,n}  & = (R_{-1}+N_{-1})_{12}(R_{-1}+N_{-1})_{21}\\
 &  =\frac {(b_{cr}-a_{cr})^2}{16}-\frac {(2a_{cr}(b_{cr}-a_{cr}))^{4/5}y_\alpha(s^*)}{4}\frac {1}{n^{2/5}}+O\left (\frac 1 {n^{3/5}}\right ).
\end{align*}This is \eqref{introduction:beta asymtotics}.

Similarly,  the asymptotics for the leading coefficient can be obtained. Indeed, we have
 \begin{align}\label{leading coefficient asymtotics}
   \gamma_{n,n}^2 =&-\frac { e^{-nl}  }{2\pi i}\frac 1{(R_{-1}+N_{-1})_{12}}\nonumber\\
=  &-\frac { e^{-nl}  }{2\pi i}\left(\frac {b_{cr}-a_{cr}}{4}i+\frac {r_1\mathcal{H}(s^*)}{2n^{1/5}}+\frac {ir_2(y_\alpha(s^*)-\mathcal{H}(s^*)^2)}{n^{2/5}}+O\left (\frac 1 {n^{3/5}}\right )\right)^{-1}\nonumber\\
=  &\frac {2 e^{-nl}}{\pi (b_{cr}-a_{cr})}\left(1-\frac {2ir_1\mathcal{H}(s^*)}{(b_{cr}-a_{cr})n^{1/5}}+\frac {4r_2(y_\alpha(s^*)-\mathcal{H}(s^*)^2)}{(b_{cr}-a_{cr})n^{2/5}}+O\left (\frac 1 {n^{3/5}}\right )\right)^{-1}\nonumber\\
=  &\frac {2 e^{-nl}}{\pi (b_{cr}-a_{cr})}\biggl(1+\frac {2ir_1\mathcal{H}(s^*)}{(b_{cr}-a_{cr})n^{1/5}} \nonumber \\
   &   -\frac {4r_1^2\mathcal{H}(s^*)^2+4r_2(y_\alpha(s^*)-\mathcal{H}(s^*)^2)(b_{cr}-a_{cr})}{(b_{cr}-a_{cr})^2n^{2/5}}+O\left (\frac 1 {n^{3/5}}\right ) \biggr),
\end{align}
which gives us \eqref{leading coefficient asymtotics-one term}.

Finally, we derive the asymptotics for $a_{n,n}$. Note that, from \eqref{a-n in terms of  Y}, we have
\begin{align}\label{a-n asymtotics}
   a_{n,n}=2\pi it\gamma_{n,n}^2Y_{11}(0)Y_{12}(0).
\end{align}
To find the  asymptotic behavior  of $Y_{11}(0)Y_{12}(0)$, we have  from \eqref{Y-trace-back} and \eqref{R-0 N-0 estimate} that
\begin{align*}%\label{Asymptotic: Y-11Y-12}
   &Y_{11}(0)Y_{12}(0)\nonumber\\
    &=e^{nl}(R(0)N(0))_{11}(R(0)N(0))_{12}\\
   &=e^{nl} \left   \{ -\frac i4\left (d^2-d^{-2}\right )-\frac {r_1 \mathcal{H}(s^*)}{2\sqrt{a_{cr} b_{cr}}n^{1/5}} \right .   \\
   &\quad \left .  -i\left (\frac {r_1^2 \mathcal{H}(s^*)^2}{4a_{cr}\sqrt{a_{cr} b_{cr}}}+\frac {r_2(y_\alpha(s^*)-\mathcal{H}(s^*)^2)(b_{cr} +a_{cr})}{2a_{cr}\sqrt{a_{cr} b_{cr}}}\right )\frac 1{n^{2/5}}+O\left (\frac 1{n^{3/5}}\right )  \right \}\nonumber\\
   &=-e^{nl}\frac {(b_{cr}-a_{cr})i}{4\sqrt{a_{cr}b_{cr}}} \left\{ 1-\frac {2ir_1 \mathcal{H}(s^*)}{(b_{cr} - a_{cr})n^{1/5}}  \right .\\
   & \quad \left .+\left ( \frac{r_1^2 \mathcal{H}(s^*)^2}{a_{cr}(b_{cr}-a_{cr})}+\frac {2r_2(y_\alpha(s^*)-\mathcal{H}(s^*)^2)(b_{cr} + a_{cr})}{a_{cr}(b_{cr}-a_{cr})}\right ) \frac{1}{n^{2/5}}+O\left (\frac 1{n^{3/5}}\right ) \right\}. \nonumber
\end{align*}
Then, the asymptotic approximation  for $a_{n,n}$ in \eqref{introduction:a-n asymtotics} follows from the above formula and \eqref{leading coefficient asymtotics}.

This completes the proof of Theorem  \ref{Theorem: Asymptotic of recurrence coff}. \hfill\qed

%%%%%%%%%%%

\section*{Acknowledgements}
{  The authors are grateful to the referee for  the valuable comments and suggestions  to improve the rigorousness  of the paper.} The work of Shuai-Xia Xu  was supported in part by the National
Natural Science Foundation of China under grant numbers
11201493  and 11571376, GuangDong Natural Science Foundation under grant numbers S2012040007824  and 2014A030313176,  and the Fundamental Research Funds for the Central Universities under grant number 13lgpy41.
Dan Dai was partially supported by   grants from the Research Grants Council of the Hong Kong Special Administrative Region, China (Project No. CityU 11300115, 11300814).
 Yu-Qiu Zhao  was supported in part by the National
Natural Science Foundation of China under grant numbers
10871212 and  11571375.

%%%%%%%%%%%%%%%%%%%%%%%%%%%%%%%%%%%%%%%%%%%%%%%%%%%%%%%%%%%%%%%%%%%%%%%%%%%%%%%%%%%%%%%%%%%%%%%%%%%%

\end{document}